\documentclass{amsart}


\usepackage{amsmath}
\usepackage{amsthm}
\usepackage{amssymb}
\usepackage{tikz, tikz-cd}
\usepackage{ytableau}
\usepackage{outlines}
\usepackage{hyperref}

\newcommand{\sg}{\sigma}

\def\Des{\operatorname{Des}}
\def\asc{\operatorname{asc}}

\def\inv{\operatorname{inv}}
\def\Inv{\operatorname{Inv}}
\def\maj{\operatorname{maj}}

\def\Coinv{\operatorname{Coinv}}

\def\S{\mathfrak{S}}

\def\multiset#1#2{\ensuremath{\left(\kern-.3em\left(\genfrac{}{}{0pt}{}{#1}{#2}\right)\kern-.3em\right)}}

\def\wt {\operatorname{wt}}

\def\shape{\operatorname{shape}}

\def\llt{LLT}

\def\arm{\operatorname{arm}}
\def\leg{\operatorname{leg}}

\def\down{\operatorname{down}}
\def\hook{\operatorname{hook}}
\def\weight{\operatorname{wt}}
\def\itab{\operatorname{IFT}}
\def\armset{\operatorname{Arm}}
\def\chromatic{X}


\newcommand{\RS}{\operatorname{RS}}
\newcommand{\CS}{\operatorname{CS}}
\newcommand{\type}{\operatorname{type}}

\newtheorem{prop}{Proposition}[subsection]
\newtheorem{cor}{Corollary}[subsection]
\newtheorem{thm}{Theorem}[subsection]

\newtheorem{defn}{Definition}[subsection]

\title{Macdonald polynomials and chromatic quasisymmetric functions}

\author{J.\ Haglund}
\address{Department of Mathematics, University of Pennsylvania \\ Philadelphia, PA 19104}
\email{\texttt{jhaglund@math.upenn.edu}}
\thanks{The first author was partially supported by NSF grant DMS-1200296.}

\author{A.\ T.\ Wilson}
\address{Department of Mathematics, University of Pennsylvania \\ Philadelphia, PA 19104}
\email{\texttt{andwils@math.upenn.edu}}
\thanks{The second author was partially supported by an NSF Mathematical Sciences Postdoctoral Research Fellowship. }

\begin{document}
\maketitle

\begin{abstract}
We express the integral form Macdonald polynomials as a weighted sum of Shareshian and Wachs' chromatic quasisymmetric functions of certain graphs. Then we use known expansions of these chromatic quasisymmetric functions into Schur and power sum symmetric functions to provide Schur and power sum formulas for the integral form Macdonald polynomials. Since the (integral form) Jack polynomials are a specialization of integral form Macdonald polynomials, we obtain analogous formulas for Jack polynomials as corollaries.
\end{abstract}

\tableofcontents

\section{Introduction}

The \emph{Macdonald polynomials} are a basis $\{P_{\mu}(x;q,t) : \text{partitions } \mu \}$ for the ring of symmetric functions which are defined by certain triangularity relations \cite{macdonald}. This basis has the additional property that it reduces to classical bases, such as the Schur functions and the monomial symmetric functions, after certain specializations of $q$ and $t$.

A second basis $\{J_{\mu}(x;q,t): \text{partitions } \mu \}$, known as the \emph{integral form Macdonald polynomials}, can be obtained by multiplying each $P_{\mu}(x;q,t)$ by a particular scalar in $\mathbb{Q}(q,t)$. These polynomials are known as integral forms because, when expanded into a suitably modified Schur function basis, the resulting coefficients are in $\mathbb{N}[q,t]$ \cite{haiman-positivity}. There is currently no formula for computing these resulting coefficients. The only result in this direction is a combinatorial formula for expanding integral form Macdonald polynomials into monomial symmetric functions \cite{hhl-nsym}.

In \cite{shareshian-wachs}, Shareshian and Wachs define the \emph{chromatic quasisymmetric function} $\chromatic_G(x;t)$ of a graph $G$, which is a $t$-analogue of Stanley's chromatic symmetric function \cite{stanley-chromatic}. Shareshian and Wachs prove that $\chromatic_G(x;t)$ is symmetric for a certain class of graphs $G$; in particular, the graphs in this paper will all have symmetric $\chromatic_G(x;t)$. These $\chromatic_G(x;t)$ also have known formulas for their Schur \cite{shareshian-wachs} and power sum \cite{ath-power} expansions.

Our main result is an expression for the integral form Macdonald polynomials in terms of chromatic quasisymmetric functions of certain graphs. In particular, given a partition $\mu$ we define two graphs: the \emph{attacking graph} of $\mu$, denoted $G_{\mu}$, and the \emph{augmented attacking graph} of $\mu$, denoted $G_{\mu}^{+}$. Both of these graphs have vertex set $\{1,2,\ldots,|\mu|\}$ and every edge of $G_{\mu}$ is an edge of $G_{\mu}^{+}$, which we write as $G_{\mu} \subseteq G_{\mu}^{+}$.  Our main formula, which appears later as Theorem \ref{thm:integral-form-chromatic}, is
\begin{align}
J_{\mu^{\prime}}(x;q,t) &= t^{-n(\mu^{\prime}) + \binom{\mu_1}{2}} (1-t)^{\mu_1} \sum_{G_{\mu} \subseteq H \subseteq G_{\mu}^{+}} \chromatic_{H}(x;t) \\
&\times \prod_{\{u,v\} \in H \setminus G_{\mu}} - \left(1 - q^{\leg_{\mu}(u) + 1} t^{\arm_{\mu}(u)}\right) \nonumber \\
&\times \prod_{\{u,v\} \in G_{\mu}^{+} \setminus H} \left(1 - q^{\leg_{\mu}(u)+1}t^{\arm_{\mu}(u)+1}\right) . \nonumber
\end{align}
where the precise definitions are given in Section \ref{sec:background}.

We use this formula to obtain Schur and power sum expansions of integral form Macdonald polynomials. The coefficients in these expansions are not generally positive, so our formulas cannot be positive. Furthermore, our expansions do have some cancelation in general. We hope that this cancelation can be simplified, or even eliminated, in future work. These expansions, along with our main formula, are the focus of Section \ref{sec:integral-form}.

Finally, the \emph{Jack polynomials} are another basis for the ring of symmetric functions which are obtained by taking a certain limit of integral form Macdonald polynomials. In Section \ref{sec:jack}, we show how to use our previous identities from Section \ref{sec:integral-form} to obtain analogous results for Jack polynomials. The resulting identities are much simpler in this case, and a reader new to this subject may prefer to read Section \ref{sec:jack} before Section \ref{sec:integral-form}.

\section{Background}
\label{sec:background}

\subsection{Integral form Macdonald polynomials}
The \emph{integral form Macdonald polynomials} $J_{\mu}(x;q,t)$ are symmetric functions in variables $x_1, x_2, \ldots$ with coefficients in the field $\mathbb{Q}(q,t)$; we refer the reader to \cite{haglund-book} for a complete definition. In \cite{hhl-nsym}, Haglund, Haiman, and Loehr prove a combinatorial formula for $J_{\mu}(x;q,t)$. Their result can also be thought of as an expansion of $J_{\mu}(x;q,t)$ into monomial symmetric functions. We state Haglund, Haiman, and Loehr's formula here, which we will use as our ``definition'' for $J_{\mu}(x;q,t)$. 

Given a partition $\mu$, let $\mu^{\prime}$ be the conjugate partition of $\mu$. A \emph{filling} of $\mu$ is a map from the cells of the (French) Ferrers diagram of $\mu$ to $\mathbb{Z}_{>0}$. Two cells in a Ferrers diagram \emph{attack} if they are in the same row or if they are one row apart and the right cell is one row above the left cell. A filling is \emph{non-attacking} if the entries in attacking cells are never equal. 

\begin{defn}
We make the following definitions for a cell $u$ in $\mu$.
\begin{itemize}
\item The \emph{arm} of $u$ in $\mu$, denoted $\arm_{\mu}(u)$, is the number of cells strictly to the right of $u$ in its row in $\mu$.
\item The \emph{leg} of $u$ in $\mu$, denoted $\leg_{\mu}(u)$, is the number of cells strictly above $u$ in its row in $\mu$, 
\item $\down_{\mu}(u)$ is the cell immediately below $u$ in $\mu$ (if such a cell exists).
\end{itemize}
\end{defn}

We also use certain statistics $\maj$ and $\inv$ on fillings of $\mu$. We will not define these statistics precisely now; for definitions, see the proof of Theorem \ref{thm:integral-form-chromatic} or the original source \cite{hhl-nsym}.

\begin{figure}
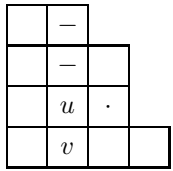

\begin{displaymath}
\begin{ytableau}
\phantom{\cdot}& - \\
\phantom{\cdot}& - &\phantom{cdot} \\
\phantom{\cdot} 	& u	& \cdot  \\
\phantom{\cdot}	&  v	&  \phantom{\cdot}	&  \phantom{\cdot}
\end{ytableau}
\end{displaymath}
\caption{This is the Ferrers diagram of the partition $(4,3,3,2)$. The cell $u$ has $\arm_{\mu}(u) = 1$ (count the dots), $\leg_{\mu}(u) = 2$ (count the dashes), and $\down_{\mu}(u) = v$.}
\label{fig:arm-leg}
\end{figure}

\begin{thm}[\cite{hhl-nsym}]
\label{thm:hhl-nsym}
\begin{align}
J_{\mu^{\prime}}(x;q,t) &= \sum_{\substack{\sg: \mu \to \mathbb{Z}_{>0} \\ \sg \text{ non-attacking}}} x^{\sg} q^{\maj(\sg, \mu)} t^{n(\mu^{\prime}) - \inv(\sg, \mu)} \\
&\times \prod_{\substack{u \in \mu \\ \sg(u) = \sg(\down_{\mu}(u))}} \left( 1 - q^{\leg_{\mu}(u)+1} t^{\arm_{\mu}(u)+1} \right) \nonumber \\
&\times \prod_{\substack{u \in \mu \\ \sg(u) \neq \sg(\down_{\mu}(u))}} \left( 1 - t \right) . \nonumber
\end{align}
where each cell in the bottom row of $\mu$ contributes to the last product.
\end{thm}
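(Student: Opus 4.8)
Since the right-hand side is exactly the Haglund--Haiman--Loehr monomial expansion of $J_{\mu'}$ from \cite{hhl-nsym}, I will only outline the proof strategy. One might first hope to verify directly that the right-hand side satisfies Macdonald's characterization of $P_{\mu'}$, up to the known scalar relating $P_{\mu'}$ and $J_{\mu'}$: triangularity in the monomial basis with the expected leading term, together with orthogonality under the $q,t$-deformed Hall inner product. The triangularity half can in principle be extracted from the formula --- a single distinguished (``superstandard'') filling supplies the leading monomial --- but the orthogonality half is not visible from a monomial expansion, so this route does not succeed as stated.

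Instead, the plan is to lift the identity to the level of \emph{nonsymmetric} Macdonald polynomials, which do satisfy a combinatorial recursion. First I would attach to each composition $\gamma$ a combinatorial polynomial $\widetilde{E}_\gamma(x;q,t)$, defined by the same type of sum over fillings of the column diagram of $\gamma$, carrying analogues of $\maj$, $\inv$, the non-attacking condition, and the two products over cells according to whether or not a cell agrees with the cell below it. Next I would record the two standard reductions that connect the statement to the $\widetilde{E}_\gamma$: on one hand, $J_{\mu'}$ is related to the modified Macdonald polynomial $\widetilde{H}_{\mu'}$ by a plethystic substitution, and it is precisely this substitution that converts the factors $(1-q^{\leg_{\mu}(u)+1}t^{\arm_{\mu}(u)+1})$ and $(1-t)$ and the prefactor $t^{n(\mu^{\prime})}$ above into their $\widetilde{H}$-counterparts; on the other hand, $\widetilde{H}_{\mu'}$ is an explicit weighted sum of the nonsymmetric Macdonald polynomials $E_\gamma$ as $\gamma$ runs over the rearrangements of the parts of $\mu'$. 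Matching these symmetrizations reduces the theorem to the single assertion $\widetilde{E}_\gamma = E_\gamma$ for every composition $\gamma$.

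The heart of the argument --- and the step I expect to be the main obstacle --- is to check that the $\widetilde{E}_\gamma$ obey the Knop--Sahi recursion that uniquely characterizes the $E_\gamma$. This recursion has three ingredients: the base case $\widetilde{E}_{(0,\dots,0)} = 1$, which is immediate since only the empty filling survives; an affine ``cyclic shift plus a box'' step relating $\widetilde{E}_\gamma$ to the polynomial indexed by $(\gamma_2,\dots,\gamma_n,\gamma_1+1)$, which on the combinatorial side is the operation of moving the first column of the diagram to the end together with a short computation of how $\maj$ changes and where the factor $q^{\gamma_1}x_n$ (with a cyclic shift of variables) originates; and the Demazure--Lusztig steps $T_i$, which correspond to interchanging two adjacent columns of the diagram of unequal height. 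This last family of steps is where essentially all the difficulty lies: it requires a weight-preserving bijection on fillings that simultaneously tracks the change in the monomial, in $\inv$, in the non-attacking condition across the two interchanged columns, and in the ``equal to the cell below'' products, and establishing it demands a careful case analysis on the relative order of the entries appearing in those two columns.

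Once the recursion is verified, uniqueness of its solution gives $\widetilde{E}_\gamma = E_\gamma$; symmetrizing recovers $\widetilde{H}_{\mu'}$, and inverting the plethystic substitution recovers $J_{\mu'}(x;q,t)$ in the form stated, with the convention --- recorded in the last line of the theorem --- that each cell of the bottom row of $\mu$ has no cell beneath it and hence contributes a factor $(1-t)$. An alternative to this entire second half would be to start from the Ram--Yip alcove-walk formula for $E_\gamma$ and pass to the Haglund--Haiman--Loehr form by a compression argument of Lenart, but the nonsymmetric recursion is the more self-contained option.
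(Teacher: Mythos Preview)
The paper does not prove this theorem at all: it is stated with a citation to \cite{hhl-nsym} and explicitly adopted as the working definition of $J_{\mu'}(x;q,t)$, so there is no in-paper argument to compare against. Your outline is therefore not competing with anything in this paper; it is a sketch of a proof of the cited result itself.

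As a sketch of the original argument, your outline is broadly faithful to one of the known routes --- the nonsymmetric one --- and the identification of the $T_i$ step in the Knop--Sahi recursion as the crux is accurate. Two small remarks. First, historically the formula for $J_{\mu'}$ was obtained by Haglund--Haiman--Loehr from their earlier combinatorial formula for the modified Macdonald polynomial $\widetilde{H}_\mu$ via the plethystic substitution $X\mapsto X(1-t)$; that earlier formula was proved by showing the combinatorial expression satisfies the axioms characterizing $\widetilde{H}_\mu$ (symmetry, triangularity in two orderings, normalization), using superization and LLT-type identities rather than the nonsymmetric recursion. Your nonsymmetric route is a legitimate alternative, and indeed the later paper \cite{hhl-nsym} recovers the symmetric case this way, but it is worth knowing that the first proof did not pass through $E_\gamma$. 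Second, your reduction ``$\widetilde{H}_{\mu'}$ is an explicit weighted sum of the $E_\gamma$ over rearrangements of $\mu'$'' is more delicate than a single line suggests: one needs the Hecke symmetrizer and the precise form of the intertwiner coefficients, and matching that symmetrization against the combinatorial one on fillings is itself nontrivial. None of this is a gap in your plan, but if you carried it out you would find those two places absorbing most of the work.
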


\subsection{Jack polynomials}
The \emph{Jack polynomials} $J_{\mu}^{(\alpha)}(x)$ are symmetric functions in the variables $x_1, x_2, \ldots$ with an extra parameter $\alpha$. They can be obtained by taking a certain limit of integral form Macdonald polynomials. 

\begin{defn}
For any partition $\mu \vdash n$, the (integral form) Jack polynomial is 
\begin{align}
J_{\mu}^{(\alpha)}(x) &= \lim_{t \to 1} \frac{J_{\mu} \left(x;t^{\alpha},t \right)}{(1-t)^n} .
\end{align}
\end{defn} 

Setting $\alpha = 1$ yields a scalar multiple of the Schur function $s_{\mu}$. In \cite{knop-sahi}, Knop and Sahi prove a combinatorial formula for $J_{\mu}^{(\alpha)}(x)$, which we recount here. 

\begin{defn}
The \emph{$\alpha$-hook} of $u$ in $\mu$ is 
\begin{align}
\hook^{(\alpha)}_{\mu}(u) = \alpha(\leg_{\mu}(u)+1) + \arm_{\mu}(u).
\end{align}
\end{defn}

\begin{thm}{\cite{knop-sahi}}
\begin{align}
J_{\mu^{\prime}}^{(\alpha)}(x) &= \sum_{\substack{\sg: \mu \to \mathbb{Z}_{>0} \\ \sg \text{ non-attacking}}} x^{\sg} \prod_{\substack{u \in \mu \\ \sg(u) = \sg(\down_{\mu}(u))}} \left(1 + \hook^{(\alpha)}_{\mu}(u)\right) \end{align}
\end{thm}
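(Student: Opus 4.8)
The plan is to derive the Knop--Sahi formula directly from the Haglund--Haiman--Loehr formula (Theorem~\ref{thm:hhl-nsym}) by setting $q = t^{\alpha}$, dividing by $(1-t)^{n}$, and taking the limit $t \to 1$. The essential point is that the index set is identical in the two statements: both are sums over non-attacking fillings $\sg \colon \mu \to \mathbb{Z}_{>0}$. So it suffices to show that, under this specialization and limit, the weight attached to each filling $\sg$ in Theorem~\ref{thm:hhl-nsym} converges to $\prod_{u \colon \sg(u) = \sg(\down_{\mu}(u))} \bigl(1 + \hook^{(\alpha)}_{\mu}(u)\bigr)$. To make the passage to the limit legitimate term by term, I would first fix a monomial $x^{\lambda}$ and work with its coefficient: the set of non-attacking fillings of $\mu$ of content $\lambda$ is finite, so $\lim_{t \to 1}$ commutes with the sum.

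Now fix a non-attacking filling $\sg$, and let $k$ be the number of cells $u \in \mu$ with $\sg(u) = \sg(\down_{\mu}(u))$. Since the two products in Theorem~\ref{thm:hhl-nsym} partition the $n$ cells of $\mu$, with every bottom-row cell placed (by convention) in the second product, exactly $n - k$ cells contribute a factor $(1-t)$. Hence $(1-t)^{-n}$ cancels these and leaves $(1-t)^{-k}$. Setting $q = t^{\alpha}$, each of the remaining $k$ factors becomes
\begin{align}
1 - q^{\leg_{\mu}(u)+1} t^{\arm_{\mu}(u)+1}\Big|_{q = t^{\alpha}} = 1 - t^{\alpha(\leg_{\mu}(u)+1) + \arm_{\mu}(u) + 1} = 1 - t^{\hook^{(\alpha)}_{\mu}(u) + 1}, \nonumber
\end{align}
so that, after specialization and division by $(1-t)^{n}$, the weight of $\sg$ equals $q^{\maj(\sg,\mu)} t^{n(\mu^{\prime}) - \inv(\sg,\mu)} \prod_{u \colon \sg(u) = \sg(\down_{\mu}(u))} \frac{1 - t^{\hook^{(\alpha)}_{\mu}(u)+1}}{1-t}$, with $q = t^{\alpha}$ understood.

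Finally, let $t \to 1$. Since $q = t^{\alpha} \to 1$, the monomial prefactor $q^{\maj(\sg,\mu)} t^{n(\mu^{\prime}) - \inv(\sg,\mu)}$ tends to $1$, while for each of the $k$ cells $\frac{1 - t^{m}}{1 - t} = 1 + t + \dots + t^{m-1} \to m$; taking $m = \hook^{(\alpha)}_{\mu}(u)+1$ gives $\hook^{(\alpha)}_{\mu}(u)+1$. Multiplying over the $k$ cells and summing over all non-attacking $\sg$ produces precisely the claimed identity. I expect no genuine obstacle here: the only points demanding care are the bookkeeping that the power of $(1-t)$ in the numerator matches the number of cells $u$ with $\sg(u)\neq\sg(\down_{\mu}(u))$ exactly (so that the limit neither blows up nor collapses), and the routine justification that the limit may be computed term by term; all of the real content is already contained in Theorem~\ref{thm:hhl-nsym} and in the definition of $J_{\mu}^{(\alpha)}$ as a limit of integral form Macdonald polynomials.
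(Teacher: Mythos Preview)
Your proposal is correct and is exactly the derivation the paper has in mind: immediately after stating this theorem the paper remarks ``This formula can also be derived from Theorem~\ref{thm:hhl-nsym}'' and gives no further details, so you have simply written out that one-line remark. The only cosmetic point is that the identity $\frac{1-t^{m}}{1-t} = 1 + t + \dots + t^{m-1}$ is literally valid only for integer $m$, whereas $m = \hook^{(\alpha)}_{\mu}(u)+1$ involves the parameter $\alpha$; replace that step by the direct computation $\lim_{t\to 1}\frac{1-t^{m}}{1-t}=m$ (e.g.\ via the derivative of $t^{m}$ at $t=1$) and nothing else changes.
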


This formula can also be derived from Theorem \ref{thm:hhl-nsym}.

\subsection{Graphs}
We will think of graphs as sets of edges $\{u,v\}$ where $u<v$ on some fixed vertex set $\{1,2,\ldots,n\}$. 

\begin{defn}[\cite{stanley-chromatic}]
The chromatic symmetric function of a graph $G$ is the symmetric function
\begin{align}
\chromatic_G(x) &= \sum_{\substack{\kappa : V(G) \to \mathbb{Z}_{>0} \\ \kappa \text{ proper coloring}}} x^{\kappa} 
\end{align}
where
\begin{align}
x^{\kappa} &= \prod_{v \in V(G)} x_{\kappa(v)} .
\end{align}
Here a map $\kappa : V(G) \to \mathbb{Z}_{>0}$ is said to be a proper coloring if $\kappa(u) \neq \kappa(v)$ whenever $\{u,v\} \in G$.
\end{defn}

Evaluating $\chromatic_G(x)$ at $x_1=x_2=\ldots=x_k=1$ and $x_{k+1}=x_{k+2}=\ldots=0$ recovers the value of the chromatic polynomial of $G$ at the positive integer $k$. In \cite{shareshian-wachs}, Shareshian and Wachs define a $t$-analogue of Stanley's chromatic symmetric function. 

\begin{defn}[\cite{shareshian-wachs}]
Given a proper coloring $\kappa$ of a graph $G$, let 
\begin{align}
\asc(\kappa) = \# \{\{u,v\} \in G : u < v,  \kappa(u) < \kappa(v)\}. 
\end{align}
The chromatic quasisymmetric function of $G$ is defined to be
\begin{align}
\chromatic_G(x;t) &= \sum_{\substack{\kappa : V(G) \to \mathbb{Z}_{>0} \\ \kappa \text{ proper coloring}}} x^{\kappa} t^{\asc(\kappa)}
\end{align} 
\end{defn}

Although $\chromatic_G(x;t)$ is not generally symmetric in the $x_i$'s, it will be symmetric in all the cases we address.

\section{Expansions of integral form Macdonald polynomials}
\label{sec:integral-form}

In this section, we give a formula for the integral form Macdonald polynomials as a weighted sum of chromatic quasisymmetric functions. Then we use formulas for the Schur and power sum expansion of chromatic quasisymmetric functions, proved by \cite{gasharov} and \cite{ath-power}, respectively, to give Schur and power sum formulas for the integral form Macdonald polynomials. First, we need to define certain graphs.

\begin{defn}
Given a partition $\mu \vdash n$, we number the cells of $\mu$ in \emph{reading order} (left to right, top to bottom) with entries $1, 2, \ldots, n$. The \emph{attacking graph} $G_{\mu}$ has vertex set $1,2,\ldots,n$ and edges $\{u, v\}$ if and only if cells $u < v$ are attacking in $\mu$. The \emph{augmented attacking graph} $G_{\mu}^{+}$ consists of the edges of $G_{\mu}$ along with the extra edges $\{u, \down_{\mu}(u)\}$ for every cell $u$ not in the bottom row of $\mu$. 
\end{defn}

\subsection{Chromatic quasisymmetric functions}

\begin{thm}
\label{thm:integral-form-chromatic}
Let $n(\lambda) = \sum_{i=1}^{\ell(\lambda)} (i-1) \lambda_i = \sum_{u \in \lambda} \leg_{\lambda}(u)$. Then
\begin{align}
J_{\mu^{\prime}}(x;q,t) &= t^{-n(\mu^{\prime}) + \binom{\mu_1}{2}} (1-t)^{\mu_1}  \sum_{G_{\mu} \subseteq H \subseteq G_{\mu}^{+}} \chromatic_{H}(x;t) \\
&\times \prod_{\{u,v\} \in H \setminus G_{\mu}} - \left(1 - q^{\leg_{\mu}(u) + 1} t^{\arm_{\mu}(u)}\right) \nonumber \\
&\times \prod_{\{u,v\} \in G_{\mu}^{+} \setminus H} \left(1 - q^{\leg_{\mu}(u)+1}t^{\arm_{\mu}(u)+1}\right) . \nonumber
\end{align}
\end{thm}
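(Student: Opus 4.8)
The plan is to start from the Haglund–Haiman–Loehr formula (Theorem \ref{thm:hhl-nsym}) and reorganize the sum over non-attacking fillings $\sg\colon\mu\to\mathbb{Z}_{>0}$ so that it becomes a sum over proper colorings of graphs between $G_\mu$ and $G_\mu^+$. The key observation is that a filling $\sg$ is non-attacking precisely when $\sg$, viewed as a coloring $\kappa$ of the vertex set $\{1,\dots,n\}$ (using the reading-order labeling), is a proper coloring of the attacking graph $G_\mu$. So the outer sum in Theorem \ref{thm:hhl-nsym} is already a sum over proper colorings of $G_\mu$; what remains is to match up the weights.

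First I would handle the product over cells $u$ with $\sg(u)=\sg(\down_\mu(u))$. Each such cell contributes an edge $\{u,\down_\mu(u)\}$ that is present in $G_\mu^+\setminus G_\mu$, and for a given proper coloring $\kappa$ of $G_\mu$, the set of such edges is exactly the set of non-$G_\mu$ edges of $G_\mu^+$ along which $\kappa$ is \emph{monochromatic}. I would then expand each factor $(1-q^{\leg_\mu(u)+1}t^{\arm_\mu(u)+1})$ for the monochromatic down-edges, and each factor $(1-t)$ for the cells where $\sg(u)\neq\sg(\down_\mu(u))$, into a product form compatible with choosing a subgraph $H$ with $G_\mu\subseteq H\subseteq G_\mu^+$: an edge of $G_\mu^+\setminus G_\mu$ is put \emph{in} $H$ when we pick the ``$-q^{\cdots}t^{\cdots}$'' term and \emph{out} of $H$ when we pick the ``$1$'' term. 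This is the standard device of converting a product $\prod(1-a_e)=\sum_{S}\prod_{e\in S}(-a_e)$ into a sum over subsets; here the subsets index the edge sets $H\setminus G_\mu$. The subtlety is that the set of \emph{available} down-edges depends on $\kappa$ (only monochromatic down-edges can be selected), but this is automatically enforced because a coloring $\kappa$ that is monochromatic on a down-edge $\{u,\down_\mu(u)\}\in H\setminus G_\mu$ is exactly a proper coloring of $H$ only if that edge is \emph{not} in $H$ — wait, rather: $\kappa$ proper on $H$ forces all selected edges to be properly colored, so a monochromatic down-edge must be left out of $H$, contributing its $(1-t)$-type term, while the $q,t$-weighted term forces the edge into $H$ and hence forces $\kappa$ to be non-monochromatic there, matching the sign convention in the statement. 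I would organize this bookkeeping carefully, tracking that the edge weight attached to $\{u,v\}\in H\setminus G_\mu$ in the theorem is $-(1-q^{\leg_\mu(u)+1}t^{\arm_\mu(u)})$ rather than with $\arm_\mu(u)+1$, which accounts for one factor of $t$ per such edge being absorbed elsewhere.

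Next I would reconcile the statistics: the theorem replaces $t^{n(\mu')-\inv(\sg,\mu)}$ and $q^{\maj(\sg,\mu)}$ from Theorem \ref{thm:hhl-nsym} with an overall monomial prefactor $t^{-n(\mu')+\binom{\mu_1}{2}}(1-t)^{\mu_1}$ times $t^{\asc(\kappa)}$ coming from $\chromatic_H(x;t)$, times the $q$-dependence now living entirely in the edge-weight products. The crucial combinatorial identities to verify are: (i) $\inv(\sg,\mu)$, the inversion statistic on fillings, can be rewritten in terms of the ascent statistic $\asc(\kappa)$ on the coloring with respect to the relevant graph $H$, up to a correction of $\binom{\mu_1}{2}$ (the number of ``diagonal inversions'' or the maximal inversion count, coming from the $\mu_1$ cells in the bottom row and the attacking pairs); and (ii) $\maj(\sg,\mu)$, which counts descents of $\sg$ weighted by arm+leg data, gets absorbed into the $q$-powers on the down-edges — specifically $\maj$ counts exactly the cells $u$ with $\sg(u)>\sg(\down_\mu(u))$ (a ``descent''), and one must see that combining the $q^{\maj}$ contribution with the expansion of the down-edge factors yields precisely the $q^{\leg_\mu(u)+1}$ powers in both products of the theorem. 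The factor $(1-t)^{\mu_1}$ is the product of the $(1-t)$ factors from the $\mu_1$ bottom-row cells, which have no cell below them and always contribute to the last product in Theorem \ref{thm:hhl-nsym}, so these can be pulled out front immediately.

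The main obstacle, I expect, is step (i): precisely relating the filling statistic $\inv(\sg,\mu)$ to $\asc(\kappa)$ for colorings of a variable graph $H$. The definition of $\inv$ on non-attacking fillings (from \cite{hhl-nsym}) counts, roughly, attacking pairs $u,v$ with $\sg(u)>\sg(v)$ in reading order, plus a contribution from the ``arm'' of each descent; meanwhile $\asc(\kappa)=\#\{\{u,v\}\in H: u<v,\ \kappa(u)<\kappa(v)\}$ is a complementary ``ascent'' count over the possibly-larger edge set of $H$. Getting the two to match requires (a) a careful sign/complement argument ($\inv$ counts inversions, $\asc$ counts non-inversions, hence the $\binom{\mu_1}{2}$-type shift and the $t^{-n(\mu')}$ normalization need to be exactly the maximal value of the relevant statistic), and (b) verifying that adding the down-edges of $H\setminus G_\mu$ to the graph contributes ascents in exactly the way the $q$-weighted expansion demands — i.e., that a monochromatic down-edge contributes neither an ascent nor a descent (so its $(1-t)$ factor, written as $1$ minus the ``ascent-flip'' weight $t$, is consistent), while the arm-leg weights encode the ``$t^{\arm}$'' discrepancy between the two products. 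I would verify these identities by a direct bijective/generating-function argument on a cell-by-cell (or row-by-row) basis, checking the contribution of each cell to $\maj$, $\inv$, $\asc$, and to the relevant edge products simultaneously, and confirming the constant shift by evaluating both sides on the trivial filling (all entries distinct in reading order, or all equal where allowed).
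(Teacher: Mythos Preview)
Your overall strategy matches the paper's: start from the Haglund--Haiman--Loehr formula, identify non-attacking fillings $\sg$ with proper colorings $\kappa$ of $G_\mu$, pull out $(1-t)^{\mu_1}$ from the bottom row, and then match the remaining weight cell-by-cell. Where you go wrong is the mechanism producing the sum over $H$.

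You propose to obtain the subgraph sum by binomially expanding each HHL factor as $\prod_e(1-a_e)=\sum_S\prod_{e\in S}(-a_e)$ and declaring $S=H\setminus G_\mu$. That does not produce the two products in the theorem: the theorem attaches $-(1-q^{\leg_\mu(u)+1}t^{\arm_\mu(u)})$ to edges of $H\setminus G_\mu$ and $(1-q^{\leg_\mu(u)+1}t^{\arm_\mu(u)+1})$ to edges of $G_\mu^+\setminus H$, and neither of these is a monomial arising from expanding a binomial. The paper runs the argument in the other direction. Fix a proper coloring $\kappa$ of $G_\mu$; its contribution to the right-hand side is a sum over all $H$ with $G_\mu\subseteq H\subseteq G_\mu^+$ for which $\kappa$ is a proper coloring of $H$. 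That sum factorizes over the down-edges $\{u,\down_\mu(u)\}$, and for each such edge there are three cases according to whether $\sg(u)<\sg(\down_\mu(u))$, $\sg(u)=\sg(\down_\mu(u))$, or $\sg(u)>\sg(\down_\mu(u))$. In the equality case only $\{u,v\}\notin H$ is allowed, giving $(1-q^{\leg_\mu(u)+1}t^{\arm_\mu(u)+1})$; in the other two cases you add the ``$\in H$'' and ``$\notin H$'' contributions (remembering that including the edge creates an extra ascent precisely when $\sg(u)<\sg(v)$), and the two-term sums collapse to exactly the HHL factors $(1-t)$ and $q^{\leg_\mu(u)+1}(1-t)$ respectively.

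Two further points. First, the $\inv$-to-$\asc$ step is much simpler than you anticipate: in this paper's conventions $\inv(\sg,\mu)$ has \emph{no} arm correction, it is just the inversion count over attacking pairs, so its complement $\coinv(\sg,\mu)$ is literally $\asc(\kappa)$ on $G_\mu$. Since the total number of attacking pairs is $2n(\mu')-\binom{\mu_1}{2}$, one gets $n(\mu')-\inv(\sg,\mu)=\coinv(\sg,\mu)-n(\mu')+\binom{\mu_1}{2}$ immediately, which explains the prefactor. Second, $\maj(\sg,\mu)=\sum_{u\in\Des(\sg,\mu)}(\leg_\mu(u)+1)$ is weighted by $\leg+1$ only, not ``arm $+$ leg''; this is exactly why the $q$-power absorbed into the down-edge case analysis is $q^{\leg_\mu(u)+1}$.
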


Before we prove Theorem \ref{thm:integral-form-chromatic}, we work through the example $\mu = (3,2)$. The cells of $\mu$ are numbered as follows.
\begin{align}
\begin{ytableau}
1 & 2 \\
3 & 4 & 5
\end{ytableau}
\end{align}
Then $G_{\mu}$ is the graph 
\begin{center}
\begin{tikzpicture}
\node (1) {1};
\node (2) [right of=1] {2};
\node (3) [right of=2] {3};
\node (4) [right of=3] {4};
\node (5) [right of=4] {5};
\path
(1) edge node [right] {} (2)
(2) edge node [right] {} (3)
(3) edge node [right] {} (4)
(5) edge[bend right] node [left] {} (3)
(4) edge node [right] {} (5);
\end{tikzpicture}
\end{center}
and $G_{\mu}^{+}$ is the graph obtained by adding edges $\{1,3\}$ and $\{2,4\}$:
\begin{center}
\begin{tikzpicture}
\node (1) {1};
\node (2) [right of=1] {2};
\node (3) [right of=2] {3};
\node (4) [right of=3] {4};
\node (5) [right of=4] {5};
\path
(1) edge node [right] {} (2)
(2) edge node [right] {} (3)
(3) edge node [right] {} (4)
(5) edge[bend right] node [left] {} (3)
(4) edge node [right] {} (5)
(3) edge[bend right] node [left] {} (1)
(4) edge[bend right] node [left] {} (2);
\end{tikzpicture}
\end{center}
We note that $\arm_{\mu}(1) = 1$, $\leg_{\mu}(1) = \arm_{\mu}(2)=\leg_{\mu}(2) = 0$, and $n(\mu^{\prime}) = 2+2=4$. The formula in Theorem \ref{thm:integral-form-chromatic} gives
\begin{align}
J_{\mu}(x;q,t) &= t^{-1} \left[ \left(1 - qt^2 \right) \left( 1 - qt \right) \chromatic_{G_{\mu}} (x;t) \right. \\
&- \left( 1 - qt \right) \left( 1 - qt \right) \chromatic_{G_{\mu} \cup \{1,3\}} (x;t) \nonumber \\
&- \left(1 - qt^2 \right) \left( 1 - q \right) \chromatic_{G_{\mu} \cup \{2,4\}} (x;t)\nonumber \\
&+ \left. \left( 1 - qt \right) \left( 1 - q \right) \chromatic_{G_{\mu}^{+}} (x;t) \right] \nonumber
\end{align}
where $G_{\mu} \cup \{u,v\}$ means we add the edge $\{u, v\}$ to $G_{\mu}$. Note that, after distributing the $t^{-1}$ term, the coefficients in this expansion are, in general, only in $\mathbb{Z}(q,t)$ but not $\mathbb{Z}[q,t]$. It is only after adding all the terms together that we get polynomial coefficients.

\begin{proof}[Proof of Theorem \ref{thm:integral-form-chromatic}]
First, we precisely state Haglund, Haiman, and Loehr's formula for $J_{\mu^{\prime}}(x;q,t)$. Given an assignment $\sg$ of a positive integer to each cell in the diagram of $\mu$, recall that $\sg$ is non-attacking if $\sg(u) \neq \sg(v)$ for each pair of cells $u$ and $v$ that either share a row or $v$ is one row below $u$ and strictly to $u$'s left. (We say such pairs of cells are \emph{attacking}.) We also let
\begin{itemize}
\item $\down_{\mu}(u)$ be the cell just below the cell $u$, if such a cell exists,
\item $\Des(\sg, \mu)$ be the set of cells with $\sg(u) > \sg(\down_{\mu}(u))$, 
\item $\maj(\sg, \mu) = \sum_{u \in \Des(\sg, \mu)} (\leg_{\mu}(u) + 1)$, 
\item $\Inv(\sg, \mu)$ be the set of attacking pairs of cells $u$ and $v$ such that $u$ appears before $v$ in reading order (as the cells are processed top down and left to right) and $\sg(u) > \sg(v)$, and
\item $\inv(\sg, \mu) = \# \Inv(\sg, \mu) - \sum_{u \in \Des(\sg, \mu)} \arm_{\mu}(u)$.
\end{itemize}
Then we have
\begin{align}
\label{hhl-proof}
J_{\mu^{\prime}}(x;q,t) &= \sum_{\substack{\sg: \mu \to \mathbb{Z}_{>0} \\ \sg \text{ non-attacking}}} x^{\sg} q^{\maj(\sg, \mu)} t^{n(\mu^{\prime}) - \inv(\sg, \mu)} \\
&\times \prod_{\substack{u \in \mu \\ \sg(u) = \sg(\down_{\mu}(u))}} \left( 1 - q^{\leg_{\mu}(u)+1} t^{\arm_{\mu}(u)+1} \right) \nonumber \\
&\times \prod_{\substack{u \in \mu \\ \sg(u) \neq \sg(\down_{\mu}(u))}} \left( 1 - t \right) . \nonumber
\end{align}
where each cell in the bottom row of $\mu$ contributes to the last product. 

We need to modify the power of $t$ slightly. Recall that $n(\mu^{\prime}) = \sum_{u \in \mu} \arm_{\mu}(u)$. The number of possible attacking pairs in a filling of shape $\mu$ is
\begin{align}
n(\mu^{\prime}) + \sum_{u \in \mu : u \notin \mu_1} \arm_{\mu}(u) = 2 n(\mu^{\prime}) - \binom{\mu_1}{2} .
\end{align}
Let $\Coinv(\sg, \mu)$ be the set of attacking pairs $u$ and $v$ such that $\sg(u) < \sg(v)$. Since $\sg$ is a non-attacking filling, we have 
\begin{align}
\# \Inv(\sg, \mu) + \# \Coinv(\sg, \mu) &= 2 n(\mu^{\prime}) - \binom{\mu_1}{2}
\end{align}
which implies
\begin{align}
n(\mu^{\prime}) - \inv(\sg, \mu) &= n(\mu^{\prime}) - \# \Inv(\sg, \mu) + \sum_{u \in \Des(\sg, \mu)} \arm_{\mu}(u) \\
&= \# \Coinv(\sg, \mu) - n(\mu^{\prime}) + \binom{\mu_1}{2} + \sum_{u \in \Des(\sg, \mu)} \arm_{\mu}(u) .
\end{align}
Hence, for a fixed non-attacking filling $\sg$, the contribution of $\sg$ to \eqref{hhl-proof} is
\begin{align}
\label{hhl-chi}
& x^{\sg} t^{-n(\mu^{\prime}) + \binom{\mu_1}{2}} (1-t)^{\mu_1} 
\prod_{u, v \text{ attacking}} t^{\chi(\sg(u) < \sg(v))} \\
&\prod_{v = \down_{\mu}(u)} \left( 1 - q^{\leg_{\mu}(u)+1} t^{\arm_{\mu}(u)+1} \right)^{\chi(\sg(u) = \sg(v))} \\ 
&\times (1-t)^{\chi(\sg(u) \neq \sg(v))} \left( q^{\leg_{\mu}(u)+1} t^{\arm_{\mu}(u)} \right)^{\chi(\sg(u)>\sg(v)} \nonumber .
\end{align}
where $\chi$ evaluates to 1 if the inner statement is true and 0 if the statement is false.

For any non-attacking filling $\sg$ of $\mu$, let $\kappa$ be the coloring of the vertices $1,2,\ldots,n$ given by $\kappa(u) = \sg(u)$. We will show that the contribution of $\sg$ to \eqref{hhl-proof} is equal to the contribution of $\kappa$ to the expression in Theorem \ref{thm:integral-form-chromatic}: 
\begin{align}
\label{chromatic-proof}
 &t^{-n(\mu^{\prime}) + \binom{\mu_1}{2}} (1-t)^{\mu_1}  \sum_{G_{\mu} \subseteq H \subseteq G_{\mu}^{+}} \chromatic_{H}(x;t)  \\ \nonumber
 &\times \prod_{\{u,v\} \in H \setminus G_{\mu}} - \left(1 - q^{\leg_{\mu}(u) + 1} t^{\arm_{\mu}(u)}\right) \\
&\times \prod_{\{u,v\} \in G_{\mu}^{+} \setminus H} \left(1 - q^{\leg_{\mu}(u)+1}t^{\arm_{\mu}(u)+1}\right) . \nonumber
\end{align} 
Clearly the term  $x^{\kappa} t^{-n(\mu^{\prime}) + \binom{\mu_1}{2}} (1-t)^{\mu_1}$ appears equally in both expressions. For attacking pairs $u$ and $v$ in $\mu$, we get a contribution of $t^{\chi(\kappa(u) < \kappa(v))}$ to \eqref{chromatic-proof} coming from the definition of $\chromatic_{H}(x;t)$ for any $G_{\mu} \subseteq H \subseteq G_{\mu}^{+}$. 

All that remains is to consider the cells $u,v$ where $v = \down_{\mu}(u)$. We get different contributions to \eqref{chromatic-proof} in each of three situations: when $\sg(u) > \sg(v)$, when $\sg(u) = \sg(v)$, and when $\sg(u) < \sg(v)$. We will show these contributions match the corresponding contributions to \eqref{hhl-chi}\footnote{One can consider the two factors in \eqref{chromatic-proof} as the solutions to a system of two unknowns and three equations. Of course, such a solution is not guaranteed to exist -- the fact that it does exist in this case is fortunate, although this phenomenon occurs often in the theory of Macdonald polynomials.}.

If $\sg(u) < \sg(v)$, we get a contribution of $1-t$ to \eqref{hhl-chi}. We get a contribution of $ \left(1 - q^{\leg_{\mu}(u)+1}t^{\arm_{\mu}(u)+1}\right)$  for each $H$ that does not contain $\{u,v\}$ and $- t \left(1 - q^{\leg_{\mu}(u) + 1} t^{\arm_{\mu}(u)}\right)$ for each $H$ which does contain $\{u,v\}$, since we have to include the factor from \eqref{chromatic-proof} as well as the extra $t$ that comes from a new ascent. Grouping on this condition and summing the contributions, we get
\begin{align}
\left(1 - q^{\leg_{\mu}(u)+1}t^{\arm_{\mu}(u)+1}\right) - t \left(1 - q^{\leg_{\mu}(u) + 1} t^{\arm_{\mu}(u)}\right) = 1-t
\end{align}
as desired. 

If $\sg_u = \sg_v$, we get a term of $1-q^{\leg_{\mu}(u)+1} t^{\arm_{\mu}(u)+1}$ in \eqref{hhl-chi}. From \eqref{chromatic-proof}, we get $\left(1 - q^{\leg_{\mu}(u)+1} t^{\arm_{\mu}(u)+1} \right)$  for each $H$ that does not contain $\{u,v\}$ and no contribution from the other graphs, since $\kappa$ is not proper for those graphs. These factors are clearly equal.

Finally, if $\sg(u) > \sg(v)$ we get $q^{\leg_{\mu}(u)+1}t^{\arm_{\mu}(u)}(1-t)$ from \eqref{hhl-chi}. We get a contribution of $ \left(1 - q^{\leg_{\mu}(u)+1}t^{\arm_{\mu}(u)+1}\right)$  for each $H$ that does not contain $\{u,v\}$ and $- t \left(1 - q^{\leg_{\mu}(u) + 1} t^{\arm_{\mu}(u)}\right)$ for each $H$ which does contain $\{u,v\}$. Grouping on this condition and summing the two contributions, we get
\begin{align}
\left(1 - q^{\leg_{\mu}(u)+1}t^{\arm_{\mu}(u)+1}\right) - \left(1 - q^{\leg_{\mu}(u) + 1} t^{\arm_{\mu}(u)}\right) = q^{\leg_{\mu}(u)+1}t^{\arm_{\mu}(u)}(1-t)
\end{align}
as desired.

\end{proof}

\subsection{Schur functions}

In \cite{shareshian-wachs}, Shareshian and Wachs provide a formula for the Schur expansion of $\chromatic_G(x;t)$ for certain graphs $G$. By applying Theorem \ref{thm:integral-form-chromatic} to their Schur formula, we obtain a Schur formula for the integral form Macdonald polynomials. Since the Schur expansion of $J_{\mu}(x;q,t)$ is not positive, our formulas cannot be positive. In general, we will have some cancelation. In order to state our formula, we need to consider certain tableaux. 

\begin{defn}
Given two partitions $\mu, \lambda \vdash n$, we again number the cells of $\mu$ with entries $1,2,\ldots,n$ in reading order. An \emph{integral form tableau} $T$ of type $\mu$ and shape $\lambda$, is a bijection $T : \lambda \to \{1,2,\ldots,n\}$ such that 
\begin{itemize}
\item the entries in each row increase from left to right,
\item if $v$ is immediately to the right of $u$ then $\{u, v\} \notin G_{\mu}$, and
\item if $v$ is immediately below $u$ and $u < v$ then $\{u, v\} \in G_{\mu}^{+}$.
\end{itemize}
We let $\itab_{\mu}$ denote the collection of all integral form tableaux of type $\mu$ (of varying shapes $\lambda$). 
\end{defn}

For example,
\begin{align}
\label{integral-tableau}
\begin{ytableau}
2 \\
3 &  5 \\
1 & 4 & 6
\end{ytableau}
\end{align}
is an integral form tableau of type $(2,2,2)$ and shape $(3,2,1)$. For reference, the reading order labeling of $(2,2,2)$ is
\begin{align}
\begin{ytableau}
1 & 2 \\
3 & 4 \\
5 & 6
\end{ytableau}
\end{align} 
and one can check that this tableau satisfies all the conditions described above. Note that a given tableau can be an integral form tableau for many different types $\mu$. Given such a tableau $T$, we construct a $q,t$-weight for the tableau, denoted $\weight_{\mu}(T)$, as follows. 

\begin{defn}
For each $u < v$ such that $\{u, v\} \in G_{\mu}^{+} \setminus G_{\mu}$, i.e.\ $v = \down_{\mu}(u)$,  we define a factor $\weight_{\mu}(T,u)$ by
\begin{itemize}
\item $\weight_{\mu}(T,u) =  t^{-\arm_{\mu}(u)} \left(1 - q^{\leg_{\mu}(u)+1}t^{\arm_{\mu}(u)+1} \right)$ if $u$ appears immediately to the left of $v$ in $T$,
\item $\weight_{\mu}(T,u) = -t^{-\arm_{\mu}(u)+1}\left(1 - q^{\leg_{\mu}(u)+1} t^{\arm_{\mu}(u)} \right)$ if $u$ appears immediately above $v$ in $T$,
\item $\weight_{\mu}(T,u) = t^{-\arm_{\mu}(u)}(1-t)$ if $u$ appears in some row above $v$ in $T$ but not immediately above $v$ in $T$, and
\item $\weight_{\mu}(T,u) = q^{\leg_{\mu}(u)+1}(1-t)$ otherwise.
\end{itemize}
We also let $\Inv_{\mu}(T)$ equal the set of edges $\{u,v\} \in G_{\mu}$ such that $u$ appears in a row strictly above $v$'s row and $\inv_{\mu}(T) = \# \Inv_{\mu}(T)$. Then we define
\begin{align}
\wt_{\mu}(T) &= t^{\inv_{\mu}(T)} \prod_{\{u,v\} \in G_{\mu}^{+} \setminus G_{\mu}} \weight_{\mu}(T,u) .
\end{align}
\end{defn} 

Let us consider the tableau $T$ of type $(2,2,2)$ and shape $(3,2,1)$ depicted in \eqref{integral-tableau}. The edges that are in $G_{\mu}^{+}$ but not in $G_{\mu}$ are $\{1,3\}$, $\{2,4\}$, $\{3,5\}$, and $\{4,6\}$. These contribute the following factors, respectively:
\begin{align}
\weight_{\mu}(T,1) &= q^{\leg_{\mu}(1)+1} \left(1-t \right) = q\left(1-t\right) \\
\weight_{\mu}(T,2) &= t^{-\arm_{\mu}(2)}(1 - t) = 1-t  \\
\weight_{\mu}(T,3)  &= t^{-\arm_{\mu}(3)} \left(1 - q^{\leg_{\mu}(3)+1}t^{\arm_{\mu}(3)+1} \right) = t^{-1} \left(1 - q^2t^2 \right) \\
\weight_{\mu}(T,4) &= t^{-\arm_{\mu}(4)} \left(1 - q^{\leg_{\mu}(4)+1} t^{\arm_{\mu}(4)+1} \right) = 1 - q^2 t .
\end{align}
Multiplying the four terms together and then by $t^{\inv_{\mu}(T)} = t^3$, we get 
\begin{align}
\wt_{\mu}(T) &= qt^2 \left(1-t\right)^2 \left(1-q^2t\right) \left(1-q^2t^2\right) .
\end{align}

\begin{cor}
\label{cor:integral-form-schur}
For any partition $\mu \vdash n$, 
\begin{align}
J_{\mu^{\prime}}(x;q,t) &= (1-t)^{\mu_1}   \sum_{T \in \itab_{\mu}} \weight_{\mu}(T) s_{\shape(T)}
\end{align}
where the sum is over all integral form tableaux of type $\mu$.
\end{cor}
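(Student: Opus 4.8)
The plan is to combine Theorem~\ref{thm:integral-form-chromatic} with the Schur expansion of chromatic quasisymmetric functions due to Gasharov / Shareshian--Wachs, and to reorganize the resulting double sum by ``merging'' each $P$-tableau with the choice of intermediate graph $H$ into a single integral form tableau. Recall that for the relevant (indifference/natural unit interval) graphs, $\chromatic_H(x;t) = \sum_{\lambda} \left( \sum_{P} t^{\asc(P)} \right) s_\lambda$, where $P$ ranges over $P$-tableaux of shape $\lambda$ for $H$: column-strict fillings of $\lambda$ with $\{1,\dots,n\}$ (bijective), rows increasing left to right, such that no row contains an edge of $H$ and every strictly-decreasing-downward pair in a column is a non-edge, with $\asc$ counting edges $\{u,v\}\in H$, $u<v$, that appear with $u$ strictly below $v$ (or the analogous convention matching the $\asc$ in the excerpt). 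The key observation is that all of the graphs $H$ in Theorem~\ref{thm:integral-form-chromatic} share the subgraph $G_\mu$ and differ only on the edges of $G_\mu^+\setminus G_\mu = \{\{u,\down_\mu(u)\}\}$, so a $P$-tableau for $H$ is exactly a bijective filling $T$ of $\lambda$ with rows increasing, containing no edge of $G_\mu$ in any row, and respecting the extra edges of $H$ in columns --- which is precisely an integral form tableau of type $\mu$ once one records, for each down-edge, whether it lies in $H$ and how $T$ places its two endpoints relative to each other.

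The main steps I would carry out are as follows. First, substitute the Gasharov/Shareshian--Wachs Schur formula into Theorem~\ref{thm:integral-form-chromatic} and interchange the order of summation to get $J_{\mu'}(x;q,t) = t^{-n(\mu')+\binom{\mu_1}{2}}(1-t)^{\mu_1}\sum_\lambda s_\lambda \sum_{G_\mu\subseteq H\subseteq G_\mu^+} \big(\sum_P t^{\asc_H(P)}\big) \prod_{e\in H\setminus G_\mu}(-1)\big(1-q^{\leg+1}t^{\arm}\big)\prod_{e\in G_\mu^+\setminus H}\big(1-q^{\leg+1}t^{\arm+1}\big)$. Second, fix a bijective filling $T$ of shape $\lambda$ satisfying the ``$G_\mu$-part'' of the conditions (rows increasing, no $G_\mu$-edge in a row), and identify exactly which pairs $(H,P=T)$ contribute it: for each down-edge $e=\{u,\down_\mu(u)\}$, the constraint is that $T$ is a valid $P$-tableau for $H$, i.e. if $e\in H$ then $u,\down_\mu(u)$ may not sit in the same row of $T$, and if they sit in the same column with the larger below the smaller then $e\notin H$. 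So the choice of $H$ factors as an independent choice at each down-edge, constrained by how $T$ places its endpoints. Third, for each down-edge and each of the four geometric configurations of $(u,\down_\mu(u))$ in $T$ --- $u$ immediately left of $\down_\mu(u)$; $u$ immediately above $\down_\mu(u)$; $u$ in a higher row but not immediately above; $u$ in a lower row (or otherwise) --- sum the local contribution over the allowed values of $\chi(e\in H)$, weighting each term by the piece of $t^{\asc_H(P)}$ it controls and by the corresponding factor from the two products in Theorem~\ref{thm:integral-form-chromatic}. This is exactly the same three/four-way case analysis already done at the end of the proof of Theorem~\ref{thm:integral-form-chromatic} (with $\sg$ replaced by ``position in $T$''), and it should collapse to precisely the four cases in the definition of $\weight_\mu(T,u)$; the residual $t^{\inv_\mu(T)}$ and the $t^{-\arm}$ prefactors are bookkeeping of the $G_\mu$-edges' $\asc$ contributions and of the global $t^{-n(\mu')+\binom{\mu_1}{2}}$ redistributed across down-edges via $n(\mu')=\sum_u\arm_\mu(u)$ and $\binom{\mu_1}{2}=\sum_{u\in\mu_1}\arm_\mu(u)$. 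Fourth, check that the set of fillings $T$ surviving with nonzero total weight is exactly $\itab_\mu$ --- in particular the third bullet of the definition of integral form tableau (if $v$ is immediately below $u$ with $u<v$ then $\{u,v\}\in G_\mu^+$) must drop out of the case analysis, because a down-edge is the only way a ``smaller-above-larger'' vertical adjacency among attacking-or-down-related cells can occur, and any non-attacking vertical adjacency that is not a down-edge would force cancellation.

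The step I expect to be the main obstacle is the third one: carefully matching the $t$-powers. The subtlety is that $\asc_H(P)$ counts all ascending edges of $H$, including the $G_\mu$-edges, and the graphs $H$ differ only in down-edges, so one must argue that the $G_\mu$-edge contribution to $\asc$ is independent of $H$ and equals $\inv_\mu(T)$ (up to the convention relating $\asc$ in $\chromatic$ to the reading-order ``$u$ before $v$'' convention in $\inv$), while the down-edge contribution to $\asc$ is precisely the extra $t$ that appeared in the ``$-t(1-q^{\leg+1}t^{\arm})$'' terms in the proof of Theorem~\ref{thm:integral-form-chromatic}. One then has to verify that the four local sums
\begin{align}
(1-q^{\leg_\mu(u)+1}t^{\arm_\mu(u)+1}) - t(1-q^{\leg_\mu(u)+1}t^{\arm_\mu(u)}) &= 1-t,
\end{align}
together with the ``$=$'' case and the ``descent'' case, reproduce $\weight_\mu(T,u)$ after multiplying by the down-edge's share $t^{-\arm_\mu(u)}$ of the global prefactor and, in the ``$u$ above $v$'' case, by one more factor of $t$ for the down-edge ascent. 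Finally, one factors $(1-t)^{\mu_1}$ back out. I would present the bookkeeping as a single explicit accounting identity per down-edge rather than re-deriving it from scratch, citing the corresponding computation in the proof of Theorem~\ref{thm:integral-form-chromatic}.
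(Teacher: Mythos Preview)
Your proposal is correct and follows essentially the same route as the paper's proof: substitute the Gasharov/Shareshian--Wachs Schur expansion $\chromatic_H(x;t)=\sum_T t^{\inv_H(T)} s_{\shape(T)}$ into Theorem~\ref{thm:integral-form-chromatic}, swap the order of summation so that a single tableau $T$ is fixed and $H$ varies, observe that the $T$ which appear for at least one $H$ are exactly the integral form tableaux, and then do the four-case local analysis on each down-edge $\{u,\down_\mu(u)\}$ to recover $\weight_\mu(T,u)$. The paper carries out exactly this computation, with the bookkeeping of the global prefactor $t^{-n(\mu')+\binom{\mu_1}{2}}$ distributed across the down-edges as $t^{-\arm_\mu(u)}$ and the $G_\mu$-part of $\inv_H(T)$ pulled out as $t^{\inv_\mu(T)}$, precisely as you anticipate.
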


As an example, we compute the coefficient of $s_{2,2}$ in the Schur expansion of $J_{3,1}(x;q,t)$, so $\mu = (2,1,1)$. The only attacking pair in the reading order labeling of $(2,1,1)$ is $\{3,4\}$. We also note that 1 is immediately above 2 and 2 is immediately above 3 in this labeling. The integral form tableaux of type $(2,1,1)$ and shape $(2,2)$ are
\begin{align}
\begin{ytableau}
2 & 4 \\
1 & 3
\end{ytableau}
\hspace{15pt}
\begin{ytableau}
2 & 3 \\
1 & 4
\end{ytableau}
\hspace{15pt}
\begin{ytableau}
1 & 4 \\
2 & 3
\end{ytableau}
\hspace{15pt}
\begin{ytableau}
1 & 3 \\
2 & 4
\end{ytableau}
\end{align}
and their respective weights are $q(1-t)^2 $, $qt(1-t)(1-q^2 t)$, $ -t(1-q)(1-q^2t)$, and $ -q^2 t^2 (1-q) (1-t)$. Summing these weights and multiplying by $(1-t)^2$, we see that the coefficient
\begin{align}
\left. J_{3,1}(x;q,t) \right|_{s_{2,2}} &= (1-t)^2 (q-t) (1-qt) (1+qt) .
\end{align}

\begin{proof}[Proof of Corollary \ref{cor:integral-form-schur}]
First, we need to recall the Schur expansion of a chromatic quasisymmetric function given in \cite{shareshian-wachs}. If $G$ is the incomparability graph of a $(3+1)$-free poset, we define a \emph{$G$-tableau} of shape $\lambda$ to be a bijective filling of $\lambda$ with integers $1,2,\ldots, |\lambda|$ such that
\begin{itemize}
\item if $u$ is immediately left of $v$, then $\{u,v\} \notin G$, and
\item if $u$ is immediately above $v$, then either $u > v$ or $\{u,v\} \in G$. 
\end{itemize}
Given such a tableau $T$, we define $\inv_{G}(T)$ to be the number of $\{u,v\} \in G$ such that $u$ appears in a row strictly above the row containing $v$. In \cite{shareshian-wachs}, Shareshian and Wachs extend a result of Gasharov \cite{gasharov} to prove that
\begin{align}
\chromatic_G(x;t) &= \sum_{\text{$G$-tableaux } T} t^{\inv_G(T)} s_{\shape(T)} 
\end{align}
for all such graphs $G$. Applying this result to Theorem \ref{thm:integral-form-chromatic}, we have
\begin{align}
\label{integral-schur-pf}
J_{\mu^{\prime}}(x;q,t) &= t^{-n(\mu^{\prime}) + \binom{\mu_1}{2}} (1-t)^{\mu_1}  \sum_{G_{\mu} \subseteq H \subseteq G_{\mu}^{+}}  \prod_{\{u,v\} \in H \setminus G_{\mu}} - \left(1 - q^{\leg_{\mu}(u) + 1} t^{\arm_{\mu}(u)}\right)  \\
&\times \prod_{\{u,v\} \in G_{\mu}^{+} \setminus H} \left(1 - q^{\leg_{\mu}(u)+1}t^{\arm_{\mu}(u)+1}\right) \nonumber
\sum_{\text{$H$-tableaux } T} t^{\inv_H(T)} s_{\shape(T)} .
\end{align}
It is not hard to check that integral form tableaux $T$ of type $\mu$ are exactly the tableaux that are $H$-tableaux for (at least) one graph $H$ such that $G_{\mu} \subseteq H \subseteq G_{\mu}^{+}$. The main idea of the proof is to switch the order of the sums in \eqref{integral-schur-pf}.  

We fix an integral form tableau $T$ of type $\mu$. We wish to calculate the contribution of $T$ to \eqref{integral-schur-pf}. The difficulty is that $T$ may be an $H$-tableau for more than one graph $H$. First, we know that each graph $H$ contains $G_{\mu}$, so we need to include $t^{\inv_{G_{\mu}}(T)} = t^{\inv_{\mu}(T)}$. The other contributions will come from edges $\{u,v\} \in G_{\mu}^{+} \setminus G_{\mu}$. We consider the possible configurations case by case and show that they match $\wt_{\mu}(T,u)$.

First, consider the case when $u$ appears immediately left of $v$ in $T$. Then $H$ must not contain the edge $\{u,v\}$, so we must choose the factor $1 - q^{\leg_{\mu}(u)+1}t^{\arm_{\mu}(u)+1}$ from the second product in \eqref{integral-schur-pf}. The $t^{-\arm_{\mu}(u)}$ factor comes from the power of $t$ at the front of \eqref{integral-schur-pf}. Multiplying these together, we obtain $\wt_{\mu}(T,u)$. 

Next, if $u$ appears immediately above $v$ in $T$, then we must have $\{u,v\} \in H$, which forces us to take the factor $-\left(1 - q^{\leg_{\mu}(u) + 1} t^{\arm_{\mu}(u)}\right)$. Again, we take $t^{-\arm_{\mu}(u)}$ from the front of \eqref{integral-schur-pf}. We also get another power of $t$ in $t^{\inv_{H}(T)}$ that does not come from $t^{\inv_{\mu}(T)}$, so we multiply by $t$ to get $\wt_{\mu}(T,u)$. 

If $u$ appears in a row above $v$ but not immediately above $v$, then $\{u,v\} \in H$ and $\{u,v\} \notin H$ are both possible. Since $T$ appears in both of these cases, we sum the weights from each of the two cases in \eqref{integral-schur-pf}. If $\{u,v\} \in H$, the contribution to \eqref{integral-schur-pf}
\begin{align}
-t^{-\arm_{\mu}+1} \left(1 - q^{\leg_{\mu}(u) + 1} t^{\arm_{\mu}(u)}\right) .
\end{align}
If $\{u,v\} \notin H$, then the contribution is 
\begin{align}
t^{-\arm_{\mu}} \left(1 - q^{\leg_{\mu}(u) + 1} t^{\arm_{\mu}(u)+1}\right) .
\end{align}
Adding these two terms, we obtain $\weight_{\mu}(T,u) = t^{-\arm_{\mu}(u)}(1-t)$.

Finally, if none of these cases apply, then $u$ is in a row strictly south of $v$. Since we never have an additional inversion in this case,  the total contribution to \eqref{integral-schur-pf} is
\begin{align}
&-t^{-\arm_{\mu}} \left(1 - q^{\leg_{\mu}(u) + 1} t^{\arm_{\mu}(u)}\right) + t^{-\arm_{\mu}} \left(1 - q^{\leg_{\mu}(u) + 1} t^{\arm_{\mu}(u)+1}\right) \\
&= q^{\leg_{\mu}(u)+1}(1-t)
\end{align}
which is equal to the desired weight. This allows us to rewrite \eqref{integral-schur-pf} as the statement in the corollary.
\end{proof}

We noted previously that the coefficients in Theorem \ref{thm:integral-form-chromatic} are only rational functions, instead of polynomials, in $q$ and $t$. Here we prove that this problem disappears for Corollary \ref{cor:integral-form-schur}.

\begin{prop}
For any integral form tableau $T$ of type $\mu$, 
\begin{align}
\weight_{\mu}(T) \in \mathbb{Z}[q,t].
\end{align} 
\end{prop}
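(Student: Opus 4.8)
The plan is to look at the formula
\begin{align*}
\wt_{\mu}(T) &= t^{\inv_{\mu}(T)} \prod_{\{u,v\} \in G_{\mu}^{+} \setminus G_{\mu}} \weight_{\mu}(T,u)
\end{align*}
and observe that the only possible source of negative powers of $t$ is a factor $t^{-\arm_{\mu}(u)}$ (or $t^{-\arm_{\mu}(u)+1}$) appearing in $\weight_{\mu}(T,u)$ in the first and third cases of its definition, namely when $u$ appears immediately to the left of $v$ in $T$ or in some (non-adjacent) row above $v$. Since every other factor, and the global factor $t^{\inv_{\mu}(T)}$, lies in $\mathbb{Z}[q,t]$, it suffices to show that for each such ``bad'' cell $u$ the negative power $t^{-\arm_{\mu}(u)}$ is cancelled by positive powers of $t$ coming from $t^{\inv_{\mu}(T)}$. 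So the first step is to isolate, for a fixed bad cell $u$ with $\arm_{\mu}(u) = a > 0$, a collection of $a$ distinct edges of $G_{\mu}$ that each contribute a power of $t$ to $t^{\inv_{\mu}(T)}$, and to make sure these collections are disjoint as $u$ ranges over the bad cells.

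The natural candidates are the attacking edges in $u$'s own row: if $\arm_{\mu}(u) = a$, then $u$ attacks the $a$ cells $u+1, u+2, \ldots, u+a$ lying to its right in the same row of $\mu$ (these are consecutive in reading order), giving $a$ edges of $G_{\mu}$ incident to $u$. The key sub-claim will be: \textbf{if $u$ is a bad cell (i.e. $u$ is weakly above $v=\down_\mu(u)$ in $T$ but not immediately above it, or immediately left of $v$), then every edge $\{u, u+i\}$ with $1 \le i \le a$ lies in $\Inv_\mu(T)$}, i.e. in $T$ the cell $u$ sits in a row strictly above the row of each $u+i$. Granting this, each bad cell $u$ ``owns'' $\arm_\mu(u)$ distinct inversion edges of $G_\mu$ (distinct because they are the only edges of $G_\mu$ joining $u$ to a larger vertex in the same row of $\mu$, so edges owned by different bad cells $u \ne u'$ are genuinely different pairs — one should check the easy case where $u' = u+i$ is itself bad and verify no double-counting, which follows since the owned edges of $u'$ join $u'$ to vertices $u'+1,\dots$ in $\mu$'s row, all larger than $u'$). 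Then $t^{\inv_\mu(T)}$ contains $\prod_{u \text{ bad}} t^{\arm_\mu(u)}$ as a factor with polynomial cofactor, which cancels all the negative powers, and the product is in $\mathbb{Z}[q,t]$.

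To prove the sub-claim I would argue by contradiction using the defining conditions of an integral form tableau. Suppose $u$ is bad and some $u+i$ (with $1 \le i \le \arm_\mu(u)$) is weakly below $u$'s row in $T$; take the smallest such $i$. Since the entries in each row of $T$ increase left to right and $\{u, u+1\},\dots,\{u,u+i\}$ all lie in $G_\mu$, condition two of the definition (horizontal neighbours are non-edges of $G_\mu$) forces $u$ and $u+i$ to be in different columns of $T$ in a controlled way; combined with condition three (if $w$ is immediately below $w'$ with $w' < w$ then $\{w',w\} \in G_\mu^+$) and the fact that $u+i$ and $u$ attack in $\mu$ but $\{u,u+i\} \notin G_\mu^+ \setminus G_\mu$ (it's already in $G_\mu$, and $\down_\mu$-edges join cells in different rows of $\mu$, whereas $u, u+i$ share a row), one derives that $u+i$ cannot sit directly below $u$ nor share a column with $u$ below it, contradicting ``weakly below''. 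The bad-versus-good distinction for $u$ enters because when $u$ is immediately above $v=\down_\mu(u)$ in $T$ (the second, ``good'' case, which produces the harmless $t^{-\arm_\mu(u)+1}$ we will need only $\arm_\mu(u)-1$ cancellations for) or strictly below $v$ in $T$ (the fourth case, no negative power at all), the argument either needs only $a-1$ edges or none.

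The main obstacle will be this combinatorial sub-claim about the placement of a row of $\mu$ inside an integral form tableau $T$: one must carefully use all three defining conditions of $\itab_\mu$ together with the row-increasing property to pin down where the cells $u+1, \ldots, u+\arm_\mu(u)$ can go relative to $u$, and to confirm the disjointness of the owned edge sets across distinct bad cells. I expect the bookkeeping to be short but delicate, since it mixes the structure of $\mu$ (which cells attack, which are $\down_\mu$-related) with the structure of $T$ (rows, columns, the order of entries); once the sub-claim is in hand, the rest is the one-line accounting of powers of $t$ described above.
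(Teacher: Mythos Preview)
Your overall strategy---cancel each negative power $t^{-\arm_\mu(u)}$ against a supply of $\arm_\mu(u)$ inversion edges drawn from $t^{\inv_\mu(T)}$, with disjoint supplies for different $u$---is exactly the paper's. The gap is in your choice of owned edges: the sub-claim that every $w\in\{u+1,\dots,u+\arm_\mu(u)\}$ lies strictly below $u$ in $T$ is false. For $\mu=(2,2)$ and the integral form tableau
\[
\begin{ytableau} 2 & 4 \\ 1 & 3 \end{ytableau}
\]
the cell $u=1$ sits immediately left of $v=\down_\mu(1)=3$, so $u$ is bad with $\arm_\mu(1)=1$; but the only candidate $w=2$ sits \emph{above} $u$ in $T$, so $\{1,2\}\notin\Inv_\mu(T)$. (Your contradiction setup also negates the wrong statement: ``$u+i$ weakly below $u$'' is what you want to prove, not its failure.) The single inversion here is $\{2,3\}$, which your accounting never sees, yet it is precisely what rescues polynomiality.

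The paper's fix is to enlarge the pool: for each $w$ in $u$'s row of $\mu$ one counts \emph{both} $\{u,w\}$ and $\{w,v\}$, where $v=\down_\mu(u)$ (note $\{w,v\}\in G_\mu$ as well, since $w$ lies one row above $v$ in $\mu$ and strictly to its right). Because $w$ can share a row of $T$ with neither $u$ nor $v$, in each of the three relevant cases at least one of these two edges is an inversion, yielding $\arm_\mu(u)$ inversions in total. Disjointness across different $u$ then holds because every edge of $G_\mu$ arises as $\{u,w\}$ or $\{w,\down_\mu(u)\}$ for a \emph{unique} cell $u$ not in the bottom row of $\mu$, so no inversion is charged twice.
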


\begin{proof}
We consider $\{u,v\} \in G_{\mu}^{+} \setminus G_{\mu}$ such that 
\begin{enumerate}
\item $u$ appears immediately left of $v$ in $T$, 
\item $u$ appears immediately above $v$ in $T$, or
\item $u$ appears above $v$ in $T$ but not immediately above $v$ in $T$.
\end{enumerate}
By definition, these are the only edges that have a negative power of $t$ in $\weight_{\mu}(T)$. Their  relevant contributions are $t^{-\arm_{\mu}(u)}$, $t^{-\arm_{\mu}(u)+1}$, and $t^{-\arm_{\mu}(u)}$, respectively. Let 
\begin{align}
\armset_{\mu}(u) &= \{w > u : w \text{ is in the same row of $\mu$ as $u$} \}.
\end{align}
It is clear from the definition that the cardinality of $\armset_{\mu}(u)$ is $\arm_{\mu}(u)$. Furthermore, we define
\begin{align}
\inv_{\mu}(T,u) &= \# \{w \in \armset_{\mu}(u) : \{u,w\} \in \Inv_{\mu}(T)\} \\
&+  \# \{w \in \armset_{\mu}(u) : \{w,v\} \in \Inv_{\mu}(T)\} \nonumber.
\end{align}
Our goal is to show that, in each of the three cases above, $\inv_{\mu}(T,u) \geq \arm_{\mu}(u)$. Since each inversion in $T$ contributes to exactly one $\inv_{\mu}(T,u)$, this will prove the claim. The key to our argument is that $w \in \armset_{\mu}(u)$ cannot share a row in $T$ with either $u$ or $v$.

Consider case 1, i.e.\ $u$ appears immediately left of $v$ in $T$. By the definition of an integral form tableau, $w \in \armset_{\mu}(u)$ may not appear in the row containing $u$ and $v$ in $T$. If it appears above this row, then $\{w,v\}$ is an inversion; if it appears below this row, then $\{u,w\}$ is an inversion. Therefore $\inv_{\mu}(T,u) = \arm_{\mu}(u)$. 

In case 2, $w$ may not appear in the row containing $u$ or the row containing $v$ in $T$, and the argument from case 1 applies.

In case 3, we have three possible configurations. Listing the cells in their order of appearance from top to bottom we either have $wuv$, $uwv$, or $uvw$. By the same logic as above, each of these three orders contributes (at least) one inversion, so $\inv_{\mu}(T,u) \geq \arm_{\mu}(u)$. 
\end{proof}

\subsection{Power sum symmetric functions}

In a similar manner, we can use Theorem \ref{thm:integral-form-chromatic} along with the power sum expansion of $\chromatic_G$, conjectured in \cite{shareshian-wachs} and proved in \cite{ath-power}, to give a power sum formula for the integral form Macdonald polynomial.

First, we recall the formula for the power sum expansion of $\chromatic_H(x;t)$ for our graphs $H$. We translate the formula from the poset setting to the graph setting for the sake of consistency. Given a partition $\lambda \vdash n$ of length $k$ and a permutation $\sg \in \S_n$ in one-line notation, we break $\sg$ into $k$ blocks of lengths $\lambda_1, \lambda_2, \ldots, \lambda_k$ from left to right. For a fixed $\lambda$, we say that $\sg$ has an \emph{$H$-descent} at position $i$ if 
\begin{itemize}
\item $i$ and $i+1$ are in the same block,
\item $\sg_i > \sg_{i+1}$, and
\item  the edge $\{\sg_{i+1}, \sg_i\} \notin H$. 
\end{itemize}
Similarly, we say that $\sg$ has a \emph{nontrivial left-to-right $H$-maximum} at position $j$ if
\begin{itemize}
\item $j$ is not the first position in its block, 
\item $\sg_i < \sg_j$ for each $i < j$ in $j$'s block, and 
\item $\{\sg_i, \sg_j\} \notin H$ for each $i < j$ in $j$'s block.
\end{itemize}
Let $\mathcal{N}_{\lambda}(H)$ be the set of permutations $\sg \in \S_n$ without $H$-descents and with no nontrivial left-to-right $H$-maxima. 

\begin{thm}[\cite{ath-power}]
\label{thm:ath-power}
For any graph $H$ which is the incomparability graph of a natural unit order, we have
\begin{align}
\omega \chromatic_H(x;t) &= \sum_{\lambda \vdash n} \frac{p_{\lambda}}{z_{\lambda}} \sum_{\sg \in \mathcal{N}_{\lambda}(H)} t^{\inv_{H}(\sg)}
\end{align}
where $\inv_H(\sg)$ is the number of pairs of indices $i < j$ such that $\sg_i > \sg_j$ and $\{\sg_i, \sg_j\} \in H$.
\end{thm}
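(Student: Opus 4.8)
\emph{Proof proposal.} The plan is to reduce the identity to a statement about permutations and then settle that statement with a sign-reversing involution. I would begin from the (known) expansion of $\chromatic_H(x;t)$ in Gessel's basis of fundamental quasisymmetric functions, due to Shareshian and Wachs: for $H$ the incomparability graph of a natural unit interval order on $\{1,\dots,n\}$,
\begin{align*}
\chromatic_H(x;t) \;=\; \sum_{\sg\in\S_n} t^{\inv_H(\sg)}\, F_{n,S_H(\sg)},
\end{align*}
where $F_{n,S}$ is the fundamental quasisymmetric function indexed by $S\subseteq\{1,\dots,n-1\}$ and $S_H(\sg)$ is the explicit set determined by the $H$-descents of $\sg$. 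Applying $\omega$ and the standard rule $\omega F_{n,S}=F_{n,\{1,\dots,n-1\}\setminus S}$ rewrites $\omega\chromatic_H(x;t)$ as a single sum over $\S_n$ of $t^{\inv_H(\sg)}$ times an explicitly indexed fundamental quasisymmetric function.

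Next I would extract the coefficient of $p_\lambda/z_\lambda$. Because $\omega\chromatic_H(x;t)$ is symmetric, this coefficient can be read off from the $F$-expansion by the classical sign rule that is dual to the ribbon expansion of the power sums (via the hook identity $p_m=\sum_k(-1)^k s_{(m-k,1^k)}$ together with $p_\lambda=\prod_i p_{\lambda_i}$): only those $\sg$ whose index set distributes among the blocks of sizes $\lambda_1,\dots,\lambda_k$ in a prescribed way contribute, each with a sign $\pm1$. Thus the theorem becomes the purely combinatorial claim that a certain signed, $\inv_H$-weighted sum over permutations compatible with $\lambda$ equals the unsigned sum $\sum_{\sg\in\mathcal{N}_{\lambda}(H)}t^{\inv_H(\sg)}$.

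I would prove that claim with a sign-reversing, $\inv_H$-preserving involution on the permutations counted with a sign, whose fixed points are precisely the elements of $\mathcal{N}_{\lambda}(H)$, that is, the permutations with no $H$-descent and no nontrivial left-to-right $H$-maximum inside any $\lambda$-block. Scanning the blocks from left to right, one locates the first ``defect'' (an $H$-descent or such a maximum) and applies a local modification there --- an adjacent transposition of two values, or a short cyclic rotation of a segment of the block --- which toggles the defect, reverses the sign, and leaves $\inv_H$ and the block-partition type $\lambda(\sg)$ unchanged. The hypothesis that $H$ is the incomparability graph of a natural unit interval order is used crucially here: it supplies the convexity property that $\{i,j\}\in H$ and $i<k<j$ force $\{i,k\},\{k,j\}\in H$, which is what keeps the local move well defined and $\inv_H$-neutral.

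I expect the design and verification of this involution to be the main obstacle. One must choose the local move so that it is genuinely an involution --- the two kinds of defect have to be handled compatibly, the move may not create a new earlier defect, and one has to check it never alters $\inv_H$ or $\lambda(\sg)$ and always flips the sign. Everything else (the $F$-expansion input, the action of $\omega$, and the classical extraction of the $p_\lambda/z_\lambda$-coefficient) is routine bookkeeping once that combinatorial core is in place; an alternative route, refining Stanley's subgraph expansion of the chromatic symmetric function by $t$, would face the same involution as its essential difficulty.
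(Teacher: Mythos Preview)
The paper does not prove this statement; Theorem~\ref{thm:ath-power} is quoted from \cite{ath-power} (Athanasiadis) and used as a black box, so there is no ``paper's own proof'' to compare against. Your proposal is therefore not being measured against anything in the present paper.

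That said, your outline is broadly in the spirit of Athanasiadis's actual argument: he too starts from the Shareshian--Wachs $F$-expansion of $\chromatic_H(x;t)$ and reduces to a combinatorial identity over $\S_n$. Where your sketch is weakest is exactly where you flag it: the extraction of the $p_\lambda/z_\lambda$-coefficient from an $F$-expansion of a symmetric function is not a one-line ``classical sign rule'' but requires a precise result (essentially the Adin--Roichman description of power sums in the fundamental basis, or an equivalent ribbon computation), and the subsequent sign-reversing involution has to be specified, not just posited. In particular, the two defect types (an $H$-descent versus a nontrivial left-to-right $H$-maximum) interact, and a naive adjacent transposition will in general change $\inv_H$ by $\pm 1$ rather than preserve it; the unit-interval convexity you invoke helps with well-definedness but does not by itself guarantee $\inv_H$-neutrality. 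Athanasiadis handles this not with a single local move but by organizing the permutations differently and appealing to a cycle-type argument. So your plan is reasonable as a high-level strategy, but as written the involution step is a genuine gap: you would need to exhibit the move explicitly and verify all three invariants (sign flips, $\inv_H$ fixed, block type fixed), and the obvious candidates fail at least one of them.
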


Now we must define a weight for each $\sg \in \mathcal{N}_{\lambda}(G_{\mu}^{+})$.

\begin{defn}
\label{defn:p-weight}
For partitions $\mu, \lambda \vdash n$ and $\sg \in \mathcal{N}_{\lambda}(G_{\mu}^{+})$, we define $\wt_{\mu, \lambda}(\sg)$ to be the product over all $\{u,v\} \in G_{\mu}^{+} \setminus G_{\mu}$ where the term corresponding to $\{u,v\}$ is
\begin{enumerate}
\item $-t (1-q^{\leg_{\mu}(u)+1} t^{\arm_{\mu}(u)})$ if $u$ and $v$ form a $G_{\mu}$-descent in $\sg$, \\
\item $-(1-q^{\leg_{\mu}(u)+1} t^{\arm_{\mu}(u)})$ if $v$ is a nontrivial left-to-right $G_{\mu}$-maximum in $\sg$, \\
\item $1-t$ if otherwise and $u$ and $v$ form a $G_{\mu}$-inversion in $\sg$, and
\item $q^{\leg_{\mu}(u)+1} t^{\arm_{\mu}(u)} (1-t)$ if otherwise.
\end{enumerate}
\end{defn}

\begin{cor}
\begin{align}
\omega J_{\mu^{\prime}}(x; q, t) &= t^{-n(\mu^{\prime}) + \binom{\mu_1}{2}} (1-t)^{\mu_1} \sum_{\lambda \vdash n} \frac{p_{\lambda}}{z_{\lambda}} \sum_{\sg \in \mathcal{N}_{\lambda}(G_{\mu}^{+})} \wt_{\mu, \lambda}(\sg) 
\end{align}
\end{cor}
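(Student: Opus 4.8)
The plan is to imitate the proof of Corollary~\ref{cor:integral-form-schur}, with the Athanasiadis power sum expansion (Theorem~\ref{thm:ath-power}) playing the role that the Shareshian--Wachs Schur expansion plays there. First I would apply $\omega$ to Theorem~\ref{thm:integral-form-chromatic} and substitute, for each graph $H$ with $G_\mu \subseteq H \subseteq G_\mu^+$, the expansion
\begin{align*}
\omega \chromatic_H(x;t) = \sum_{\lambda \vdash n} \frac{p_\lambda}{z_\lambda} \sum_{\sg \in \mathcal{N}_\lambda(H)} t^{\inv_H(\sg)} .
\end{align*}
This uses that every such $H$ is the incomparability graph of a natural unit interval order, which is precisely the standing hypothesis in this paper guaranteeing that $\chromatic_H(x;t)$ is symmetric. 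The result is a sum over triples $(H, \lambda, \sg)$, and, exactly as in Corollary~\ref{cor:integral-form-schur}, the crux is to move the sums over $\lambda$ and $\sg$ outside and compute, for each fixed pair $(\lambda, \sg)$, the total contribution of all admissible graphs $H$.

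To do this I would fix $\lambda$ and $\sg$ and determine for which $H$ one has $\sg \in \mathcal{N}_\lambda(H)$. Writing a down edge as $\{u,v\}$ with $v = \down_\mu(u)$ (so $u < v$), I would first split
\begin{align*}
\inv_H(\sg) = \inv_{G_\mu}(\sg) + \#\{\{u,v\} \in H \setminus G_\mu : v \text{ precedes } u \text{ in } \sg\},
\end{align*}
so that $t^{\inv_{G_\mu}(\sg)}$ pulls out in front and each remaining power of $t$ is pinned to an individual down edge lying in $H$. The structural heart of the argument is a locality statement: toggling one down edge $\{u,v\}$ into or out of $H$ affects only the $H$-descent condition at the unique position (if any) where $u$ and $v$ are adjacent in $\sg$, and the nontrivial left-to-right $H$-maximum condition at the position of $v$ --- the point being that the only edges of $G_\mu^+ \setminus G_\mu$ incident to $v$ are $\{u,v\}$ and $\{v, \down_\mu(v)\}$, the latter joining $v$ to a larger vertex and hence irrelevant to a maximum condition at $v$. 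Thus $\sg \in \mathcal{N}_\lambda(G_\mu \cup S)$ is equivalent to a conjunction of one condition per down edge: $\{u,v\}$ must lie in $H$ exactly when $u, v$ form a $G_\mu$-descent of $\sg$, or when $v$ is a nontrivial left-to-right $G_\mu$-maximum of $\sg$ with $u$ occurring earlier in $v$'s block (cases (1) and (2) of Definition~\ref{defn:p-weight}), and is otherwise free. A by-product is that permutations outside $\mathcal{N}_\lambda(G_\mu^+)$ admit no valid $S$ and contribute nothing, which is why the outer sum in the statement ranges over $\mathcal{N}_\lambda(G_\mu^+)$.

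Granting locality, the sum over $H$ for fixed $(\lambda, \sg)$ factors as $t^{\inv_{G_\mu}(\sg)}$ times a product over the down edges of a local sum, and I would check that each factor matches Definition~\ref{defn:p-weight}. A forced edge contributes the single term $-\bigl(1 - q^{\leg_\mu(u)+1} t^{\arm_\mu(u)}\bigr) t^{\chi(v \text{ before } u)}$, which is $-t\bigl(1 - q^{\leg_\mu(u)+1} t^{\arm_\mu(u)}\bigr)$ in case (1) (where $v$ sits immediately before $u$) and $-\bigl(1 - q^{\leg_\mu(u)+1} t^{\arm_\mu(u)}\bigr)$ in case (2) (where $u$ precedes $v$). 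A free edge contributes two terms, $\bigl(1 - q^{\leg_\mu(u)+1} t^{\arm_\mu(u)+1}\bigr)$ from $\{u,v\} \notin H$ and $-\bigl(1 - q^{\leg_\mu(u)+1} t^{\arm_\mu(u)}\bigr) t^{\chi(v \text{ before } u)}$ from $\{u,v\} \in H$, which sum to $1 - t$ when $v$ precedes $u$ (case (3)) and to $q^{\leg_\mu(u)+1} t^{\arm_\mu(u)}(1-t)$ when $u$ precedes $v$ (case (4)). Multiplying over all down edges and restoring the factor $t^{\inv_{G_\mu}(\sg)}$ yields exactly $\wt_{\mu,\lambda}(\sg)$ --- the residual power $t^{\inv_{G_\mu}(\sg)}$ being absorbed into $\wt_{\mu,\lambda}$ in the same way the factor $t^{\inv_\mu(T)}$ is absorbed in Corollary~\ref{cor:integral-form-schur} --- and reassembling the sums gives the stated identity.

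The main obstacle I anticipate is establishing the locality statement together with the companion bookkeeping: that cases (1)--(4) are exhaustive and, on $\mathcal{N}_\lambda(G_\mu^+)$, mutually exclusive. Concretely, one must verify that for $\sg \in \mathcal{N}_\lambda(G_\mu^+)$ a $G_\mu$-descent at $\{u,v\}$ never simultaneously makes $v$ a nontrivial left-to-right $G_\mu$-maximum, and that in cases (3) and (4) the vertex $v$ is automatically not a nontrivial left-to-right $G_\mu$-maximum of $\sg$ --- both are short case analyses on the relative positions within a block of $u$, $v$, and the earlier block elements. Once these are settled, everything else is the routine bookkeeping summarized above.
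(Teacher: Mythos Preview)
Your approach is essentially identical to the paper's: combine Theorem~\ref{thm:integral-form-chromatic} with Theorem~\ref{thm:ath-power}, reverse the order of summation, and for each down edge $\{u,v\}$ determine whether membership in $H$ is forced or free, recovering exactly the four factors in Definition~\ref{defn:p-weight}. The paper's proof carries out precisely this case analysis, though more tersely and without your explicit locality discussion. One remark: you correctly isolate the residual factor $t^{\inv_{G_\mu}(\sg)}$ coming from the $G_\mu$-edges in $\inv_H(\sg)$, and you assume it is absorbed into $\wt_{\mu,\lambda}(\sg)$ by analogy with $\wt_\mu(T)$ in Corollary~\ref{cor:integral-form-schur}; note, however, that Definition~\ref{defn:p-weight} as written does \emph{not} include this factor, and the paper's own proof glosses over it as well --- so your bookkeeping is in fact more careful than the original on this point.
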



\begin{proof}
By Theorem \ref{thm:integral-form-chromatic} and \ref{thm:ath-power}, 
\begin{align}
\omega J_{\mu^{\prime}}(x; q, t) &=  t^{-n(\mu^{\prime}) + \binom{\mu_1}{2}} (1-t)^{\mu_1}  \\ &\sum_{G_{\mu} \subseteq H \subseteq G_{\mu}^{+}}
\prod_{\{u,v\} \in H \setminus G_{\mu}} - \left(1 - q^{\leg_{\mu}(u) + 1} t^{\arm_{\mu}(u)}\right) \nonumber \\
&\times \prod_{\{u,v\} \in G_{\mu}^{+} \setminus H} \left(1 - q^{\leg_{\mu}(u)+1}t^{\arm_{\mu}(u)+1}\right) . \nonumber \\
&\times \sum_{\lambda \vdash n} \frac{p_{\lambda}}{z_{\lambda}} \sum_{\sg \in \mathcal{N}_{\lambda}(H)} t^{\inv_{H}(\sg)} .
\end{align}
Now we consider each edge $\{u, v\} \in G_{\mu}^{+} \setminus G_{\mu}$ along with Definition \ref{defn:p-weight}. If $\sg \in \mathcal{N}_{\lambda}(H)$ for any of the graphs $H$ it must be in $\mathcal{N}_{\lambda}(G_{\mu}^{+})$. If we see a $G$-descent $vu$, then we know that $\{u,v\} \in H$, so we multiply the factor $ - \left(1 - q^{\leg_{\mu}(u) + 1} t^{\arm_{\mu}(u)}\right)$ as well as by a $t$, since we get an $H$-inversion between $u$ and $v$. This yields the factor in (1) of Definition \ref{defn:p-weight}. The second possibility is that $v$ is a nontrivial left-to-right $G_{\mu}$-maximum. Again, this forces $\{u,v\} \in H$ and we get the factor appearing in (2) in Definition \ref{defn:p-weight}. If neither of these situations occurs, then we have freedom to either include or exclude $\{u,v\}$ from $H$. If $u$ is to the left of $v$ in $\sg$, this results in simply adding the factors
\begin{align}
 - \left(1 - q^{\leg_{\mu}(u) + 1} t^{\arm_{\mu}(u)}\right) + \left(1 - q^{\leg_{\mu}(u)+1}t^{\arm_{\mu}(u)+1}\right)
 \end{align} 
 which yields the factor in (4) of Definition \ref{defn:p-weight}. If $u$ is to the right of $v$ in $\sg$, then we create a new $H$-inversion if we include the edge $\{u,v\}$, so we get
 \begin{align}
  - t\left(1 - q^{\leg_{\mu}(u) + 1} t^{\arm_{\mu}(u)}\right) + \left(1 - q^{\leg_{\mu}(u)+1}t^{\arm_{\mu}(u)+1}\right) = 1-t
 \end{align}
 which is the factor in (3).
\end{proof}

\section{Expansions of Jack polynomials}
\label{sec:jack}

In this section, we obtain results for Jack polynomials that are analogous to our results for integral form Macdonald polynomials. Often, the results here are simpler than the results in Section \ref{sec:integral-form}, since they are specializations of our previous results. All results follow directly from the analogous result in Section \ref{sec:integral-form} and the definition of $J_{\mu}^{(\alpha)}(x)$, so we omit proofs.

\subsection{Chromatic symmetric functions}

Recall that 
\begin{align}
\hook^{(\alpha)}_{\mu}(u) = \alpha(\leg_{\mu}(u)+1) + \arm_{\mu}(u).
\end{align}

\begin{thm}
\label{thm:jack-chromatic}
\begin{align}
J_{\mu^{\prime}}^{(\alpha)}(x) &= \sum_{G_{\mu} \subseteq H \subseteq G_{\mu}^{+}} \chromatic_H(x) \prod_{\{u,v\} \in H \setminus G_{\mu}} \left(-\hook^{(\alpha)}_{\mu}(u) \right) \prod_{\{u,v\} \in G_{\mu}^{+} \setminus H} \left(1 + \hook^{(\alpha)}_{\mu}(u)\right)
\end{align}
\end{thm}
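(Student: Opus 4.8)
The plan is to obtain Theorem \ref{thm:jack-chromatic} as a direct specialization of Theorem \ref{thm:integral-form-chromatic}, using the definition
\[
J_{\mu^{\prime}}^{(\alpha)}(x) = \lim_{t\to 1} \frac{J_{\mu^{\prime}}(x; t^{\alpha}, t)}{(1-t)^n}.
\]
First I would substitute $q = t^{\alpha}$ into the formula of Theorem \ref{thm:integral-form-chromatic}. Each factor of the form $1 - q^{\leg_{\mu}(u)+1} t^{\arm_{\mu}(u)+1}$ becomes $1 - t^{\alpha(\leg_{\mu}(u)+1) + \arm_{\mu}(u)+1} = 1 - t^{\hook^{(\alpha)}_{\mu}(u)+1}$, and each factor $1 - q^{\leg_{\mu}(u)+1} t^{\arm_{\mu}(u)}$ becomes $1 - t^{\hook^{(\alpha)}_{\mu}(u)}$. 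The prefactor $t^{-n(\mu^{\prime}) + \binom{\mu_1}{2}}$ tends to $1$ as $t \to 1$, so it will simply disappear in the limit.

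The key bookkeeping step is matching the powers of $(1-t)$. The prefactor $(1-t)^{\mu_1}$ supplies $\mu_1$ copies of $(1-t)$. Each edge $\{u,v\} \in G_{\mu}^{+}\setminus H$ contributes a factor $1 - t^{\hook^{(\alpha)}_{\mu}(u)+1}$, which vanishes to order $1$ at $t=1$ with $\lim_{t\to 1}\frac{1 - t^{\hook^{(\alpha)}_{\mu}(u)+1}}{1-t} = \hook^{(\alpha)}_{\mu}(u)+1$. Similarly, each edge $\{u,v\} \in H\setminus G_{\mu}$ contributes $-(1 - t^{\hook^{(\alpha)}_{\mu}(u)})$, with $\lim_{t\to 1}\frac{-(1 - t^{\hook^{(\alpha)}_{\mu}(u)})}{1-t} = -\hook^{(\alpha)}_{\mu}(u)$. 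The number of such edges is $|G_{\mu}^{+}\setminus H| + |H\setminus G_{\mu}| = |G_{\mu}^{+}\setminus G_{\mu}|$, which equals the number of cells of $\mu$ not in the bottom row, i.e.\ $n - \mu_1$. Adding the $\mu_1$ from the prefactor, the total order of vanishing is exactly $n$, which is precisely what the denominator $(1-t)^n$ absorbs. Finally, $\chromatic_H(x;t) \to \chromatic_H(x)$ as $t\to 1$, since setting $t=1$ in the definition of the chromatic quasisymmetric function recovers Stanley's chromatic symmetric function. Collecting the limits term by term inside the (finite) sum over $H$ yields exactly the stated formula.

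The main obstacle — though a mild one — is justifying that the limit commutes with the finite sum over subgraphs $H$ and with the product over edges; this is immediate because everything in sight is a polynomial (or a ratio of polynomials with a removable singularity at $t=1$), so each summand has a well-defined limit and there are finitely many summands. One should also double-check the edge count $|G_{\mu}^{+}\setminus G_{\mu}| = n - \mu_1$: by definition $G_{\mu}^{+}$ adds exactly one edge $\{u, \down_{\mu}(u)\}$ for each cell $u$ not in the bottom row of $\mu$, and these are all new edges (a cell and the cell below it are never attacking), so the count is correct. With these two points verified, the theorem follows, and since the paper explicitly states that proofs in this section are omitted, only this brief derivation is needed.
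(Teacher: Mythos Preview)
Your derivation is correct and matches the paper's approach exactly: the paper explicitly states that all results in Section~\ref{sec:jack} follow directly from the analogous results in Section~\ref{sec:integral-form} together with the definition of $J_{\mu}^{(\alpha)}(x)$, and omits the proofs. Your careful count of the $(1-t)$ factors and the verification that $|G_{\mu}^{+}\setminus G_{\mu}| = n-\mu_1$ are precisely what is needed to carry out that specialization.
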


Let us consider the example $\mu^{\prime} = (2,2,1)$, so $\mu = (3,2)$. The cells of $\mu$ are numbered as follows.
\begin{align}
\begin{ytableau}
1 & 2 \\
3 & 4 & 5
\end{ytableau}
\end{align}
Then $G_{\mu}$ is the following graph.
\begin{center}
\begin{tikzpicture}
\node (1) {1};
\node (2) [right of=1] {2};
\node (3) [right of=2] {3};
\node (4) [right of=3] {4};
\node (5) [right of=4] {5};
\path
(1) edge node [right] {} (2)
(2) edge node [right] {} (3)
(3) edge node [right] {} (4)
(5) edge[bend right] node [left] {} (3)
(4) edge node [right] {} (5);
\end{tikzpicture} 
\end{center}
The formula in Theorem \ref{thm:jack-chromatic} gives
\begin{align}
J_{\mu^{\prime}}^{(\alpha)}(x) &= \left(-\hook^{(\alpha)}_{\mu}(1) \right) \left(-\hook^{(\alpha)}_{\mu}(2) \right) \chromatic_{G_{\mu}} (x) \\
&+ \left(1+ \hook^{(\alpha)}_{\mu}(1)\right) \left(-\hook^{(\alpha)}_{\mu}(2)\right) \chromatic_{G_{\mu} \cup \{1,3\}} (x) \nonumber \\
&+ \left(-\hook^{(\alpha)}_{\mu}(1) \right) \left(1+ \hook^{(\alpha)}_{\mu}(2) \right) \chromatic_{G_{\mu} \cup \{2,4\}} (x)\nonumber \\
&+ \left(1+ \hook^{(\alpha)}_{\mu}(1)\right) \left( 1+\hook^{(\alpha)}_{\mu}(2) \right) \chromatic_{G_{\mu}^{+}} (x)\nonumber 
\end{align}
We have $\hook^{(\alpha)}_{\mu}(1) = \alpha + 1$ and $\hook^{(\alpha)}_{\mu}(2) = \alpha$, which completes the computation.

\subsection{Schur functions}

In \cite{gasharov}, Gasharov gives a formula for the Schur expansion of $\chromatic_G(x)$ whenever $G$ is the incomparability graph of a (3+1)-free poset. One can check that all graphs appearing in Theorem \ref{thm:jack-chromatic} meet this condition, so we get a Schur function formula for Jack polynomials. First, we define a new weight function on integral form tableaux.

To compute $\weight_{\mu}^{(\alpha)}(T)$, we begin with 1 and multiply by 
\begin{itemize}
\item $1+\hook^{(\alpha)}_{\mu}(u)$ if $u$ appears immediately left of $v$ in $T$, and
\item $-\hook^{(\alpha)}_{\mu}(u)$ if $u$ appears immediately above $v$ in $T$
\end{itemize}
for each $\{u,v\} \in G_{\mu}^{+} \setminus G_{\mu}$.
In the example depicted in \eqref{integral-tableau}, the pairs $\{u,v\}$ are $\{1,3\}$, $\{2,4\}$, $\{3,5\}$, and $\{4,6\}$. Then the weight is
\begin{align}
\weight^{(\alpha)}_{\mu}(T) &= \left(1+\hook^{(\alpha)}_{\mu}(2) \right) \left(1+ \hook^{(\alpha)}_{\mu}(4) \right) = (\alpha+1)(2\alpha+1).
\end{align}

\begin{cor}
For any partition $\mu \vdash n$,
\begin{align}
J_{\mu^{\prime}}^{(\alpha)}(x) &= \sum_{T \in \itab_{\mu}} \weight^{(\alpha)}_{\mu}(T) s_{\shape(T)}
\end{align}
where the sum is over all integral form tableaux of type $\mu$.
\end{cor}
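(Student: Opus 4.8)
The plan is to derive the Schur expansion of $J_{\mu^\prime}^{(\alpha)}(x)$ by specializing Corollary \ref{cor:integral-form-schur} in exactly the same way that Theorem \ref{thm:jack-chromatic} specializes Theorem \ref{thm:integral-form-chromatic}. Concretely, I would start from
\begin{align*}
J_{\mu^{\prime}}(x;q,t) &= (1-t)^{\mu_1} \sum_{T \in \itab_{\mu}} \weight_{\mu}(T)\, s_{\shape(T)},
\end{align*}
substitute $q = t^{\alpha}$, divide by $(1-t)^n$, and take the limit $t \to 1$, invoking the definition $J_{\mu}^{(\alpha)}(x) = \lim_{t\to 1} J_{\mu}(x;t^{\alpha},t)/(1-t)^n$. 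Since the index set $\itab_{\mu}$ does not depend on $q$ or $t$, the whole computation reduces to analyzing what happens to the prefactor $(1-t)^{\mu_1}$ times each $\weight_{\mu}(T)$ under this limit.

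The key bookkeeping step is to count powers of $(1-t)$. Recall $n = |\mu| = \sum_i \mu_i$ and that $G_\mu^+ \setminus G_\mu$ has exactly $n - \mu_1$ edges, one $\{u, \down_\mu(u)\}$ for each cell $u$ not in the bottom row. Inspecting the four cases in the definition of $\weight_{\mu}(T,u)$: the ``immediately left'' and ``immediately above'' cases contribute a factor with no $(1-t)$ (the factors $t^{-\arm}(1-q^{\leg+1}t^{\arm+1})$ and $-t^{-\arm+1}(1-q^{\leg+1}t^{\arm})$ each vanish to order $0$ in $(1-t)$ after the $q=t^\alpha$ substitution, since $1 - t^{\alpha(\leg+1)}t^{\arm+1} \to 0$... wait, this needs care), while the other two cases contribute exactly one factor of $(1-t)$ each. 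So I should first check: after setting $q = t^\alpha$, which factors actually vanish at $t=1$. We have $1 - q^{\leg+1}t^{\arm+1} = 1 - t^{\alpha(\leg+1)+\arm+1}$, which vanishes to first order at $t=1$ with derivative $-(\alpha(\leg+1)+\arm+1) = -(1 + \hook^{(\alpha)}_\mu(u))$; similarly $1 - q^{\leg+1}t^{\arm} = 1 - t^{\alpha(\leg+1)+\arm}$ vanishes to first order with $\lim_{t\to 1}(1-t^{\alpha(\leg+1)+\arm})/(1-t) = \alpha(\leg+1)+\arm = \hook^{(\alpha)}_\mu(u)$. Hence every one of the four cases of $\weight_\mu(T,u)$ contributes exactly one factor of $(1-t)$ (either explicit, or hidden inside the binomial), for a total of $n - \mu_1$ factors, and together with the prefactor $(1-t)^{\mu_1}$ this gives $(1-t)^n$, which cancels the denominator exactly. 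Thus the limit is finite and nonzero, as it must be.

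With the power-counting done, I would then compute the limit of each normalized factor. Using $\lim_{t\to 1}(1-t^a)/(1-t) = a$ and $\lim_{t\to 1} t^{-\arm} = \lim_{t\to 1} t^{\inv_\mu(T)} = 1$, the four cases of $\weight_\mu(T,u)$ collapse as follows: the ``immediately left of $v$'' case gives $\frac{1 - q^{\leg+1}t^{\arm+1}}{1-t}\big|_{q=t^\alpha} \to 1 + \hook^{(\alpha)}_\mu(u)$; the ``immediately above $v$'' case gives $-\frac{1-q^{\leg+1}t^{\arm}}{1-t}\big|_{q=t^\alpha} \to -\hook^{(\alpha)}_\mu(u)$; the ``above but not immediately above'' case gives $t^{-\arm}(1-t)/(1-t) \to 1$; and the last case gives $q^{\leg+1}(1-t)/(1-t)|_{q=t^\alpha} \to 1$. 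The two cases that limit to $1$ are precisely the configurations that do not appear in the (simpler) definition of $\weight^{(\alpha)}_\mu(T)$, so the surviving product matches $\weight^{(\alpha)}_\mu(T)$ exactly. Passing the (finite, term-by-term) limit through the finite sum over $T \in \itab_\mu$ then yields the claimed identity $J_{\mu^\prime}^{(\alpha)}(x) = \sum_{T \in \itab_\mu} \weight^{(\alpha)}_\mu(T)\, s_{\shape(T)}$.

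I expect the only genuine obstacle to be the justification that the limit commutes with the finite sum and that no individual term blows up — i.e., the power-of-$(1-t)$ accounting above. Once one verifies that each of the $n - \mu_1$ factors $\weight_\mu(T,u)$ contributes exactly one zero of order one at $t=1$ (after $q = t^\alpha$), everything else is a routine application of L'Hôpital / the geometric-series limit $\lim_{t\to 1}(1-t^a)/(1-t)=a$, and the result follows; this is exactly why the paper can reasonably omit the proof, noting that it ``follows directly from the analogous result in Section \ref{sec:integral-form} and the definition of $J_{\mu}^{(\alpha)}(x)$.''
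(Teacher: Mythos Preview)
Your proposal is correct and follows exactly the route the paper indicates (but omits): specialize Corollary~\ref{cor:integral-form-schur} via $q=t^{\alpha}$, divide by $(1-t)^n$, and let $t\to 1$. Your power-of-$(1-t)$ accounting---that each of the $n-\mu_1$ factors $\weight_{\mu}(T,u)$ contributes exactly one simple zero at $t=1$ after the substitution, combining with the prefactor $(1-t)^{\mu_1}$ to cancel the denominator exactly---is the only nontrivial verification needed, and you have carried it out correctly; the four limiting values $1+\hook^{(\alpha)}_{\mu}(u)$, $-\hook^{(\alpha)}_{\mu}(u)$, $1$, $1$ then reproduce $\weight^{(\alpha)}_{\mu}(T)$ on the nose.
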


As an example, let us revisit the case where $\lambda = (2,2)$ and $\mu = (2,1,1)$. Again, the integral form tableaux of type $(2,1,1)$ and shape $(2,2)$ are as follows.
\begin{align}
\begin{ytableau}
2 & 4 \\
1 & 3
\end{ytableau}
\hspace{15pt}
\begin{ytableau}
2 & 3 \\
1 & 4
\end{ytableau}
\hspace{15pt}
\begin{ytableau}
1 & 4 \\
2 & 3
\end{ytableau}
\hspace{15pt}
\begin{ytableau}
1 & 3 \\
2 & 4
\end{ytableau}
\end{align}
Their respective weights are $1$, $\left(1+\hook^{(\alpha)}_{\mu}(2)\right)$, $\-\hook^{(\alpha)}_{\mu}(1)\left(1+\hook^{(\alpha)}_{\mu}(2)\right)$, and $-\hook^{(\alpha)}_{\mu}(1)$. Adding these weights together and using the fact that $\hook^{(\alpha)}_{\mu}(1) = \alpha$ and $\hook^{(\alpha)}_{\mu}(2) = 2\alpha$, we obtain
\begin{align}
\left. J_{(3,1)}^{(\alpha)}(x) \right|_{s_{2,2}} &= 1 + 2\alpha + 1 - (2\alpha^2 + \alpha) - \alpha = -2\alpha^2 +  2.
\end{align}

\subsection{Power sum symmetric functions}

We can also use Stanley's result in \cite{stanley-chromatic} on the power sum expansion of $\chromatic_G$ to obtain a power sum formula for Jack polynomials.

\begin{cor}
\label{cor:jack-power}
\begin{align}
J_{\mu^{\prime}}^{(\alpha)}(x)  &= \sum_{H \subseteq G_{\mu}^{+}} (-1)^{|H|} p_{\lambda(H)} \prod_{\{u,v\} \in H \setminus  G_{\mu}} -\hook^{(\alpha)}_{\mu}(u) \\
&= \sum_{H \subseteq G_{\mu}^{+}} (-1)^{\left| H \cap G_{\mu} \right|} p_{\lambda(H)} \prod_{\{u,v\} \in H \setminus  G_{\mu}} \hook^{(\alpha)}_{\mu}(u). \nonumber 
\end{align}
where $|H|$ is the number of edges in $H$ and $\lambda(H)$ is the partition whose parts equal the sizes of the connected components of the graph induced by $H$. 
\end{cor}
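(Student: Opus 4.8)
The plan is to derive Corollary \ref{cor:jack-power} by combining Theorem \ref{thm:jack-chromatic} with Stanley's power sum expansion of the (ordinary) chromatic symmetric function. Recall Stanley's formula: for any graph $G$ on $n$ vertices, $\omega \chromatic_G(x) = \sum_{S \subseteq E(G)} (-1)^{|S|} p_{\lambda(S)}$, where $\lambda(S)$ records the block sizes of the connected components of the spanning subgraph with edge set $S$; equivalently, since $p_\lambda$ is $\omega$-fixed up to sign governed by $n - \ell(\lambda(S))$ and $|S|$ has the same parity issue only after accounting for the forest structure, it is cleanest to use the version $\chromatic_G(x) = \sum_{S \subseteq E(G)} (-1)^{|S|} p_{\lambda(S)}$ directly (this is the standard ``broken circuit''/inclusion–exclusion form, valid because $p_{\lambda}$ already incorporates the sign correctly in Stanley's theorem as usually stated with $\omega$; I would state precisely whichever normalization makes the two displayed lines of the corollary literally correct, noting that the paper's $\chromatic$ convention here is the non-$\omega$ one so a short sign bookkeeping step is needed). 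Since every graph $H$ with $G_\mu \subseteq H \subseteq G_\mu^+$ is an incomparability graph of a natural unit interval order, Stanley's formula applies to each such $H$.

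First I would substitute Stanley's expansion into Theorem \ref{thm:jack-chromatic}, obtaining a triple sum: over $H$ with $G_\mu \subseteq H \subseteq G_\mu^+$, over subsets $S \subseteq E(H)$, and with the two hook-product weights attached to $H \setminus G_\mu$ and $G_\mu^+ \setminus H$. The key maneuver is to reorganize this as a single sum over all subgraphs $H' \subseteq G_\mu^+$ — here $H'$ plays the role of the ``colored'' subgraph $S$ — by recognizing that choosing $H$ and then $S \subseteq E(H)$ is the same as choosing $S \subseteq E(G_\mu^+)$ (call it $H'$) and then choosing $H$ with $S \cup G_\mu \subseteq H \subseteq G_\mu^+$; wait, one must be careful because $S$ need not contain $G_\mu$. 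Instead I would fix $H' = S \subseteq E(G_\mu^+)$ and sum over all admissible $H \supseteq G_\mu$ with $S \subseteq E(H)$, i.e.\ $G_\mu \cup S \subseteq H \subseteq G_\mu^+$. The edges of $G_\mu^+ \setminus G_\mu$ that lie outside $G_\mu \cup S$ are then free to be in or out of $H$, and for each such free edge $\{u,v\}$ the two possible choices contribute $-\hook^{(\alpha)}_\mu(u)$ (when $\{u,v\}\in H$) or $(1+\hook^{(\alpha)}_\mu(u))$ (when $\{u,v\}\notin H$), which sum to $1$. This telescoping collapse is exactly what eliminates the sum over $H$ and leaves only a sum over $H' = S$.

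After the collapse, the edges of $G_\mu^+ \setminus G_\mu$ that DO lie in $S$ are forced into $H$, so they contribute the factor $-\hook^{(\alpha)}_\mu(u)$ each, while the edges of $G_\mu$ contribute nothing extra (they are always in $H$) beyond the sign $(-1)^{|S|}$ from Stanley. Collecting: the sign $(-1)^{|S|}$ splits as $(-1)^{|S \cap G_\mu|} \cdot (-1)^{|S \setminus G_\mu|}$, and the $(-1)^{|S \setminus G_\mu|}$ merges with $\prod_{\{u,v\}\in S \setminus G_\mu}(-\hook^{(\alpha)}_\mu(u)) = (-1)^{|S\setminus G_\mu|}\prod \hook^{(\alpha)}_\mu(u)$ to give the second displayed line; keeping the signs together instead gives the first line. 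The partition attached to $S$ is $\lambda(S)$ by definition. This yields exactly the two forms in Corollary \ref{cor:jack-power}.

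The main obstacle I anticipate is purely bookkeeping rather than conceptual: getting the $\omega$ / sign normalization of Stanley's power sum formula to match the paper's convention for $\chromatic_G(x)$, and confirming that the ``free edge'' factors genuinely sum to $1$ with the hook weights as defined in Theorem \ref{thm:jack-chromatic} (this is the identity $(1+h) + (-h) = 1$, which is the Jack-specialization $q\to$ trivial, $t\to 1$ shadow of the identities $(1 - q^{\leg+1}t^{\arm+1}) - (1 - q^{\leg+1}t^{\arm}) $ and its cousins used in the proof of Theorem \ref{thm:integral-form-chromatic}). One should also double-check that no edge of $G_\mu$ is ever ``free'' — it never is, since $G_\mu \subseteq H$ always — so the collapse only runs over $G_\mu^+ \setminus G_\mu$, which is what makes the products in the final answer range over $H \setminus G_\mu$ only. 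Since the paper explicitly says proofs in this section are omitted as specializations, I would in fact present this as a one-paragraph remark: apply $\lim_{t\to 1}$ after dividing by $(1-t)^n$ to the Macdonald power sum corollary, or equivalently run the above collapse directly on Theorem \ref{thm:jack-chromatic}.
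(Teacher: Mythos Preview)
Your approach is correct and matches the paper's intended (but omitted) proof: substitute Stanley's expansion $\chromatic_H(x)=\sum_{S\subseteq E(H)}(-1)^{|S|}p_{\lambda(S)}$ into Theorem~\ref{thm:jack-chromatic}, swap the order of summation, and use $(-\hook^{(\alpha)}_\mu(u))+(1+\hook^{(\alpha)}_\mu(u))=1$ on each free edge of $G_\mu^+\setminus(G_\mu\cup S)$ to collapse the sum over $H$. The only quibble is your hedging about an $\omega$: Stanley's formula already reads $\chromatic_G(x)=\sum_{S}(-1)^{|S|}p_{\lambda(S)}$ with no $\omega$, so no sign bookkeeping is needed and that paragraph of caveats can be dropped.
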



For example, if $\mu^{\prime} = (3,1)$ then $\mu = (2,1,1)$, so $G_{\mu} = \{\{3,4\}\}$ and $G_{\mu}^{+} = \{\{1,2\},\{2,3\},\{3,4\}\}$. If we wish to compute the coefficient of $p_{2,2}$, we note that the only $H \subseteq G_{\mu}^{+}$ that has 2 connected components of size 2 is $H = \{\{1,2\}, \{3,4\}\}$. Therefore
\begin{align}
\left. J_{(3,1)}^{(\alpha)}(x) \right|_{p_{2,2}} &= -\hook^{(\alpha)}_{\mu}(1) = -\alpha.
\end{align}

\section{Open problems}

\subsection{Cancelation}
As the examples we have computed show, there is generally cancelation in our Schur and power sum formulas. We would like to reduce some of this cancelation via involutions or other methods. It would also be interesting to quantify how much cancelation occurs in our current formulas. For example, one could hope to rigorously compare the amount of cancelation in our Schur formula with the amount of cancelation that comes from computing the Schur expansion using the inverse Kostka numbers. 

\subsection{Hanlon's Conjecture}
A specific cancelation problem comes from a conjecture of Hanlon. Given any bijection $T_0: \lambda \to \{1,2,\ldots,n\}$, let $\RS(T_0)$ and $\CS(T_0)$ be the row and column stabilizers of $T_0$; these are the subgroups of $\S_n$ that consist of permutations that only permute elements that are in the same row or column of $T_0$, respectively. Hanlon conjectured that there is a function $f: \RS(T_0) \times \CS(T_0) \to \mathbb{N}$ such that
\begin{align}
J_{\lambda}^{(\alpha)}(x) = \sum_{\substack{\sg \in \RS(T_0) \\ \tau \in \CS(T_0)}} \alpha^{f(\sg, \tau)} \epsilon(\tau) p_{\type(\sg \tau)}
\end{align}
where $\epsilon(\tau)$ is the sign of the permutation $\tau$ and the type of a permutation is the partition associated to its conjugacy class. This conjecture has been proved in the $\alpha=2$ case \cite{feray-jack}. It bears some intriguing similarities to Corollary \ref{cor:jack-power}, although we have not been able to clarify these similarities as of yet. We hope that Corollary \ref{cor:jack-power} may lead to a new approach to Hanlon's Conjecture.

\subsection{Specializations}
One classical property of the Jack polynomials is that it recovers other well-known functions by specialization, e.g.\ $\alpha = 1$ leads to a scalar multiple of the Schur functions and $\alpha = 2$ recovers the zonal spherical functions \cite{macdonald}. We would like for these specializations to be evident in our formulas for $J_{\mu}^{(\alpha)}$, but that is not currently the case.

\subsection{Schur positivity}
It is a conjecture of Haglund that
\begin{align}
\frac{J_{\mu}(x;q,q^k)}{(1-q)^n}
\end{align}
is Schur positive for any $\mu \vdash n$ and any positive integer $k$. Similarly, we have noticed that the specialization
\begin{align}
\frac{t^{k (n(\mu^{\prime}))}J_{\mu}(x;t^{-k},t)}{(1-t)^n} 
\end{align}
is Schur positive and palindromic for $k \in \mathbb{N}$. We hope that our work may inspire progress on these conjectures.

\subsection{LLT polynomials}
LLT polynomials were defined by Lascoux, Leclerc, and Thibon in \cite{llt} and are prominent objects in the study of Macdonald polynomials. The LLT polynomials associated to collections of single cells are a close relative of chromatic quasisymmetric functions; in fact, every such LLT polynomial can be written
\begin{align}
\llt_G(x; t) &= \sum_{\kappa : V(G) \to \mathbb{Z}_{>0}} x^{\kappa} t^{\asc(\kappa)}
\end{align}
for some  $G$ which is the incomparability graph of a unit interval order. Note that the only difference between this definition and the definition for $\chromatic_G(x; t)$ is that we do not insist that $\kappa$ is a proper coloring in this case. 

The relationship between $\llt_G(x; t)$ and $\chromatic_G(x; t)$ can also be captured using plethysm \cite{plethysm}. Specifically, if $G$ has $n$ vertices then
\begin{align}
\label{chromatic-llt}
\chromatic_G(x; t) = (t-1)^{-n} \llt_G[(t-1)x; t]
\end{align}
where the brackets imply plethystic substitution and $x$ stands for the plethystic sum $x_1+x_2+\ldots$. This identity is equivalent to Proposition 3.4 in \cite{carlsson-mellit}, which the authors of that paper prove using results in \cite{hhl}. Equation \eqref{chromatic-llt} can be rewritten as
\begin{align}
\llt_G(x; t) &= (t-1)^n \chromatic_G [ x/(t-1); t ] .
\end{align}
As a result, every formula for a chromatic quasisymmetric function (of the incomparability graph of a unit interval order) leads to a plethystic formula for the corresponding LLT polynomial. This is especially nice in the power sum case, as plethysm is easiest to compute in the power sum basis. From Theorem \ref{thm:ath-power}, we get
\begin{align}
\omega \llt_G(x; t) &= (t-1)^n \sum_{\lambda \vdash n} \frac{p_{\lambda}[x/(t-1); t]}{z_{\lambda}} \sum_{\sg \in \mathcal{N}_{\lambda}(G)} t^{\inv_G(\sg)} \\
&= (t-1)^n \sum_{\lambda \vdash n} \frac{p_{\lambda}}{(t^{\lambda_1}-1) (t^{\lambda_2}-1) \ldots z_{\lambda}} \sum_{\sg \in \mathcal{N}_{\lambda}(G)} t^{\inv_G(\sg)} \\
\label{llt}
&= \sum_{\lambda \vdash n} \frac{(t-1)^{n-\ell(\lambda)} p_{\lambda}}{[\lambda_1]_t [\lambda_2]_t \ldots z_{\lambda}} \sum_{\sg \in \mathcal{N}_{\lambda}(G)} t^{\inv_G(\sg)}.
\end{align}
where we use the usual notation for the $t$-analogue, $[n]_t = 1 + t + \ldots + t^{n-1}$. Furthermore, Shareshian and Wachs proved that the sum $\sum_{\sg \in \mathcal{N}_{\lambda}(G)} t^{\inv_G(\sg)}$ is divisible by the product $[\lambda_1]_t [\lambda_2]_t \ldots$ \cite{shareshian-wachs}, which we describe below.

Given a partition $\lambda \vdash n$ and a graph $G$ on $n$ vertices that is the incomparability graph of a unit interval order, let $\widetilde{\mathcal{N}}_{\lambda}(G)$ be the set of permutations $\sg \in \S_n$ such that when we break $\sg$ (presented in one-line notation) into segments of lengths $\lambda_1, \lambda_2, \ldots$ then
\begin{outline}
\1 the leftmost entry in each segment is the smallest entry in the segment, and
\1 within each segment, if we see consecutive entries $\sg_i$, $\sg_{i+1}$ such that $\sg_i < \sg_{i+1}$ then we must have $\{\sg_i, \sg_{i+1}\} \notin G$. 
\end{outline}
We can use Proposition 7.8 in \cite{shareshian-wachs} to rewrite \eqref{llt} as 
\begin{align}
\omega \llt_G(x; t) &=  \sum_{\lambda \vdash n} \frac{(t-1)^{n-\ell(\lambda)} p_{\lambda}}{z_{\lambda}} \sum_{\sg \in \widetilde{\mathcal{N}}_{\lambda}(G)} t^{\inv_G(\sg)}.
\end{align}

It would be valuable to see if there is a deeper relationship between LLT polynomials (possibly those that do not correspond to collections of single cells) and chromatic quasisymmetric functions.
\subsection{A non-symmetric analogue}


The \emph{integral form non-symmetric Macdonald polynomials} $\{ \mathcal{E}_{\gamma}(x;q,t) : \gamma \in \mathbb{N}^n\}$ are polynomials in $x_1, x_2, \ldots, x_n$ with coefficients in $\mathbb{Q}(q,t)$ that form a basis for the polynomial ring in variables $x_1, x_2, \ldots, x_n$. They are closely related to double affine Hecke algebras \cite{cherednik} and are more easily extended to other root systems than the symmetric case \cite{cherednik-nsym}. 

In \cite{hhl-nsym}, Haglund, Haiman, and Loehr obtain a combinatorial formula for $\mathcal{E}_{\gamma}(x;q,t)$ that is very similar to their formula for $J_{\mu}(x;q,t)$. In fact, it is so similar that our main result goes through in that setting, albeit with a new function replacing the chromatic quasisymmetric function. 

\begin{defn}
Given a graph $G$ with its vertices numbered $1,2,\ldots,n$ and a positive integer $r \leq n$, we define the chromatic non-symmetric function to be 
\begin{align}
\mathcal{\chromatic}_{G,r}(x;t) &= \sum_{\kappa : V(G) \to \{1,2,\ldots,r\}} x^{\kappa} t^{\asc(\kappa)}
\end{align}
where the sum is over proper colorings $\kappa$ such that $\kappa(v) = n - v + 1$ if $n-r+1 \leq v \leq n$.
\end{defn}

In \cite{hhl-nsym}, the authors define new notations of arms, legs, and attacking pairs for $\gamma \in \mathbb{N}^n$. We refer the reader to \cite{hhl-nsym} for this notation. We define the attacking and augmented attacking graphs $G_{\gamma}$ and $G_{\gamma}^{+}$ analogously for these new notions of attacking and descents for $\gamma \in \mathbb{N}^n$. Our main theorem goes through for these new definitions.

\begin{prop}
\label{prop:chromatic-nsym}
\begin{align}
\mathcal{E}_{\gamma}(x;q,t) &= t^{-\sum_{u \in \gamma} \arm_{\gamma}(u)} \sum_{G_{\gamma} \subseteq H \subseteq G_{\gamma}^{+}} \mathcal{\chromatic}_{H}(x;t) \\
&\times \prod_{\{u,v\} \in H \setminus G_{\gamma}} - \left(1 - q^{\leg_{\gamma}(u) + 1} t^{\arm_{\gamma}(u)}\right) \nonumber \\
&\times \prod_{\{u,v\} \in G_{\gamma}^{+} \setminus H} \left(1 - q^{\leg_{\gamma}(u)+1}t^{\arm_{\gamma}(u)+1}\right) . \nonumber
\end{align}
\end{prop}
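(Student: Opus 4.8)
The plan is to mimic the proof of Theorem \ref{thm:integral-form-chromatic} essentially verbatim, replacing each ingredient with its non-symmetric counterpart from \cite{hhl-nsym}. First I would recall Haglund, Haiman, and Loehr's combinatorial formula for $\mathcal{E}_{\gamma}(x;q,t)$, which has exactly the same shape as \eqref{hhl-proof}: a sum over non-attacking fillings $\sg$ of the diagram of $\gamma$ (with the constraint imposed by the ``basement'' entries $\kappa(v) = n - v + 1$), with a monomial weight $x^{\sg}$, a factor $q^{\maj(\sg,\gamma)} t^{\text{(coinv-type statistic)}}$, a product over cells $u$ with $\sg(u) = \sg(\down_{\gamma}(u))$ contributing $1 - q^{\leg_{\gamma}(u)+1} t^{\arm_{\gamma}(u)+1}$, and a product over cells with $\sg(u) \neq \sg(\down_{\gamma}(u))$ contributing $(1-t)$. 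The one bookkeeping difference from the symmetric case is the normalization of the power of $t$: here I would check that the exponent of $t$ attached to each non-attacking filling equals $\coinv(\sg,\gamma) - \sum_{u \in \gamma} \arm_{\gamma}(u)$, where $\coinv$ counts attacking pairs $u,v$ (in reading order) with $\sg(u) < \sg(v)$, which is what produces the prefactor $t^{-\sum_{u}\arm_{\gamma}(u)}$ and matches the $\mathcal{\chromatic}_{H}$ conventions.

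Next I would fix a non-attacking filling $\sg$ of $\gamma$ and let $\kappa(u) = \sg(u)$ be the corresponding coloring of the vertex set $\{1,\dots,n\}$; the basement constraint on $\sg$ is precisely the constraint $\kappa(v) = n-v+1$ for $n-r+1 \le v \le n$ built into the definition of $\mathcal{\chromatic}_{G,r}$. The term $x^{\kappa}$ and the prefactor appear identically on both sides. For each attacking pair $\{u,v\}$ of $\gamma$ — exactly the edges of $G_{\gamma}$ — the factor $t^{\chi(\kappa(u) < \kappa(v))}$ is supplied by $\asc(\kappa)$ inside $\mathcal{\chromatic}_{H}(x;t)$ for every $H$ with $G_{\gamma} \subseteq H \subseteq G_{\gamma}^{+}$, matching the $\coinv$ contribution on the $\mathcal{E}$ side. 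Then, exactly as in the proof of Theorem \ref{thm:integral-form-chromatic}, I would split into the three cases for each edge $\{u, \down_{\gamma}(u)\} \in G_{\gamma}^{+} \setminus G_{\gamma}$ — namely $\sg(u) < \sg(\down_{\gamma}(u))$, $\sg(u) = \sg(\down_{\gamma}(u))$, and $\sg(u) > \sg(\down_{\gamma}(u))$ — and verify the same three algebraic identities
\begin{align*}
\left(1 - q^{\leg_{\gamma}(u)+1}t^{\arm_{\gamma}(u)+1}\right) - t \left(1 - q^{\leg_{\gamma}(u) + 1} t^{\arm_{\gamma}(u)}\right) &= 1-t, \\
\left(1 - q^{\leg_{\gamma}(u)+1}t^{\arm_{\gamma}(u)+1}\right) - \left(1 - q^{\leg_{\gamma}(u) + 1} t^{\arm_{\gamma}(u)}\right) &= q^{\leg_{\gamma}(u)+1}(1-t),
\end{align*}
with the middle case being the single surviving term $1 - q^{\leg_{\gamma}(u)+1}t^{\arm_{\gamma}(u)+1}$. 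Summing over all $H$ between $G_{\gamma}$ and $G_{\gamma}^{+}$ then reassembles the $\mathcal{E}_{\gamma}$ expansion.

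The step I expect to require the most care is not the algebra — which is identical to the symmetric case — but confirming that the relevant structural facts about reading order, attacking pairs, and descents from \cite{hhl-nsym} behave correctly when the diagram is a composition $\gamma \in \mathbb{N}^n$ rather than a partition $\mu$, and in particular that ``$v = \down_{\gamma}(u)$'' is still the only way an edge can lie in $G_{\gamma}^{+} \setminus G_{\gamma}$, and that a descent of $\sg$ at $u$ still contributes $\leg_{\gamma}(u)+1$ to the major index. Once these dictionary entries are in place, the proof is a line-by-line transcription of the proof of Theorem \ref{thm:integral-form-chromatic}, so I would simply indicate the few places where $\mu$, $n(\mu^{\prime})$, and $\binom{\mu_1}{2}$ are replaced by $\gamma$, $\sum_{u \in \gamma}\arm_{\gamma}(u)$, and the basement-constrained color count, and leave the rest to the reader.
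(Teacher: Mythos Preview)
Your proposal is correct and matches the paper's own treatment exactly: the paper does not give a separate proof of this proposition but simply states ``Our main theorem goes through for these new definitions,'' i.e.\ the proof of Theorem \ref{thm:integral-form-chromatic} transcribes line by line once the non-symmetric arms, legs, attacking pairs, and basement conventions from \cite{hhl-nsym} are substituted. Your outline of that transcription, including the bookkeeping that replaces $t^{-n(\mu')+\binom{\mu_1}{2}}(1-t)^{\mu_1}$ by $t^{-\sum_{u}\arm_{\gamma}(u)}$ and absorbs the bottom-row $(1-t)$ factors into the basement comparisons, is precisely what the paper has in mind.
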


There are a number of ways that this setting could be explored in future work. For example, it seems like the chromatic non-symmetric functions appearing in Proposition \ref{prop:chromatic-nsym} expand positively into key polynomials, also known as Demazure characters \cite{reiner-shimozono}. This implies that the chromatic non-symmetric functions may also appear as a character. One could also try to understand the coefficients of the expansion of chromatic non-symmetric functions into key polynomials and then use these coefficients along with Proposition \ref{prop:chromatic-nsym} to obtain an expansion of $\mathcal{E}_{\gamma}(x;q,t)$ into key polynomials; such an expansion would be unique because key polynomials are linearly independent.

\section*{Acknowledgements}
The authors would like to thank Per Alexandersson, Emily Sergel Leven, Greta Panova, and George Wang for many helpful discussions and suggestions. The authors are also indebted to Erik Carlsson and Anton Mellit, since equation (11) in \cite{carlsson-mellit} inspired this entire project.

\bibliographystyle{alpha}
\bibliography{statistics}

\end{document}